\documentclass[12pt,a4paper,oneside,final,reqno]{amsart}

\usepackage{amsthm}
\usepackage{amsmath}
\usepackage{amssymb}
\usepackage{mathtools}
\usepackage{amscd}
\usepackage[utf8]{inputenc}
\usepackage{typearea}

\usepackage{yfonts}
\usepackage{textcomp}
\usepackage{mathrsfs}
\usepackage{hyperref}
\usepackage[draft]{fixme} % erase draft to disable fixme notes
\usepackage{pdfsync}
\usepackage[active]{srcltx}
\usepackage{verbatim}
\usepackage{todonotes}
\usepackage[capitalise]{cleveref}
\usepackage{tikz-cd}
\usepackage{tensor}

\usepackage[backend=bibtex, style=alphabetic, maxnames=50]{biblatex}

\newcommand{\N}{\mathbb{N}}
\newcommand{\R}{\mathbb{R}}

\newcommand{\Cr}{C^*_r}

\newcommand{\Go}{G^{(0)}}
\newcommand{\Iso}{\mathrm{Iso}}

\theoremstyle{plain}
\newtheorem{theorem}{Theorem}[section]
\newtheorem{corollary}[theorem]{Corollary}
\newtheorem{lemma}[theorem]{Lemma}
\newtheorem{proposition}[theorem]{Proposition}

\newtheorem{introtheorem}{Theorem}

\newtheorem{introcorollary}[introtheorem]{Corollary}

\theoremstyle{definition}
\newtheorem{definition}[theorem]{Definition}
\newtheorem{example}[theorem]{Example}
\newtheorem{remark}[theorem]{Remark}

\numberwithin{equation}{section}
 
\title{Amenability and comparison for \texorpdfstring{\'etale}{étale} groupoids with polynomial growth}

\date{\today}

\author[1]{Are Austad}
\address{Are Austad, Department of Mathematics, University of Oslo, P.O.Box 1053 Blindern, 0316 Oslo, Norway}
\email{areaus@math.uio.no}

\author[2]{Christian Bönicke}
\address{Christian Bönicke, School of Mathematics, Statistics and Physics, Newcastle University, Newcastle upon Tyne NE1 7RU,
UK}
\email{christian.bonicke@ncl.ac.uk}

\keywords{\'Etale groupoids, amenability, dynamical comparison, length functions, polynomial growth}
\subjclass[2020]{Primary: 22A22; Secondary: 37B05, 43A07}
\begin{document}
\begin{abstract}
    We show that any second-countable locally compact Hausdorff \'etale groupoid with polynomial growth is topologically amenable. 
    If moreover the groupoid is compactly generated with compact and metrizable unit space, it  has weak $m$-comparison. 
    Thus if the groupoid is also ample and minimal, it satisfies Matui's AH-conjecture.  
\end{abstract}

\maketitle
\section{Introduction}
The growth rate of a finitely generated group, that is the asymptotic behaviour of the function counting the number of elements of length at most $n$ (relative to a finite symmetric generating set), has long been a central object of interest in geometric group theory.  
A landmark theorem by Gromov \cite{GromovPolyGrowth81} tells us that a finitely generated group has polynomial growth if and only if it admits a nilpotent subgroup of finite index. 
Thus, groups of polynomial growth form a particularly well-behaved class of amenable groups.

The notion of a length function generalises from discrete groups to \'etale groupoids in a straightforward manner (cf. Definition \ref{def:length-function}), allowing us to consider the growth rate of an \'etale groupoid $G$ with respect to a length function $\ell \colon G \to [0,\infty)$.  
With the growing number of applications of topological groupoids in dynamics, noncommutative geometry, operator algebras, and group theory (via the construction of the associated topological full groups)
it seems natural to investigate whether imposing a constraint on the growth rate has any structural consequences for the groupoid. While there is no analogue of Gromov's theorem available for groupoids, one might still hope to use the machinery of growth rates to produce strong results. 

This idea was first executed successfully by Nekrashevych, who used it to construct simple finitely generated algebras of arbitrary Gelfand-Kirillov dimension $\geq 2$ and simple finitely generated algebras of quadratic growth over arbitrary fields \cite{Nekrashevych2016}. 
More recently, \cite{Hou2017} and \cite{AOP-JNCG} showed that certain operator algebras associated with \'etale groupoids with polynomial growth admit spectrally invariant subalgebras of rapidly decaying functions. 

Passing to the world of topological groupoids, the notion of amenability splits into two separate concepts that are independent of each other: topological amenability and fibrewise amenability.  
Topological amenability
is very natural from a $C^*$-algebraic perspective, as it turns out to be equivalent to nuclearity of the associated reduced groupoid $C^*$-algebra \cite{DelarocheRenaultAmenableGpds2000}.
Fibrewise amenability on the other hand
is based on the existence of F{\o}lner sequences in the range fibres of the groupoid (see \cite{MaWu2020,Ma2021} for a more detailed discussion of this concept). While both concepts agree for discrete groups, in the realm of \'etale groupoids there is no implication between the two in general. A groupoid can be topologically amenable but not fibrewise amenable, and vice versa. 

The study of topological amenability for groupoids has received a lot of attention, and as such
there are many examples of topologically amenable \'etale groupoids in the literature, e.g.\ AF groupoids, graph groupoids, Deaconu-Renault groupoids \cite{SimsWilliamsPrimitiveIdeals2016}, transformation groupoids arising from amenable actions, and coarse groupoids arising from spaces with property (A) \cite{SkandalisTuYu2002}, though this is not an exhaustive list.  

Note that unlike for groups, 
the weak containment property does not imply topological amenability. That is, one can construct an \'etale groupoid $G$ which is not topologically amenable, yet satisfies  $C^*(G)\cong \Cr(G)$.
The first 
such 
example 
was constructed by Willett in \cite{WillettNonamenable2015}, with subsequent principal examples exhibited by Alekseev and Finn-Sell in \cite{AlekseevFinnSellWCP2018}. The relationship between topological amenability and the weak containment property is further explored in detail in for example \cite{DelarocheExactGpds26} and  \cite{DelarocheAmenability}.

A standard proof that groups with polynomial growth are necessarily amenable relies on the fact that the balls of 
increasing radii
are F{\o}lner sets for the group. Thus, it is not surprising that an adaptation of this proof to the setting of groupoids shows that an \'etale groupoid with polynomial growth is fibrewise amenable in the sense of \cite{MaWu2020}, see \cite[Proposition 3.2]{AOP2026}. Our first main result tells us that polynomial growth in fact also implies topological amenability.

\begin{introtheorem}[cf. Theorem \ref{thm:PG-implies-amenable}]
    Let $G$ be a second-countable locally compact Hausdorff \'etale groupoid, and suppose $G$ has polynomial growth. Then $G$ must be topologically amenable. 
\end{introtheorem}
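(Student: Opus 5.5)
We aim to produce an approximate invariant continuous mean in the sense of topological amenability, built from normalized indicator functions of balls. Let $\ell$ be a continuous length function (Definition \ref{def:length-function}) for which $G$ has polynomial growth: there are $C,d>0$ with
\[
|B_n(x)| := |\{g\in G^x : \ell(g)\le n\}| \le C(1+n)^d
\]
for all $x\in\Go$ and all $n$. Sharp cutoffs are not continuous, so we smooth them. For $n\in\N$ define $f_n(g)=\max(0,\,n-\ell(g))$ and
\[
\mu^x_n(g)=\frac{f_n(g)}{\sum_{h\in G^x} f_n(h)}, \qquad g\in G^x .
\]
The denominator is at least $n$, since the unit $x$ has length $0$. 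It is finite by the growth bound. Because $\ell$ is continuous, $f_n$ is continuous, and local finiteness of the balls (from properness of $\ell$) should give that $x\mapsto\mu_n^x$ is a continuous family of probability measures on the range fibres. If needed, we instead verify the equivalent criterion via positive type functions in $C_c(G)$.

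The key estimate is approximate invariance, uniformly on compact sets. Fix a compact $K\subseteq G$ and let $L=\sup_K \ell$. For $\gamma\in K$ with $s(\gamma)=y$ and $r(\gamma)=x$, subadditivity gives
\[
|\ell(\gamma h)-\ell(h)|\le L, \qquad\text{hence}\qquad |f_n(\gamma h)-f_n(h)|\le L .
\]
This difference is supported on $B_{n+L}(y)$. Therefore
\[
\sum_h|f_n(\gamma h)-f_n(h)| \le L\,C(1+n+L)^d,
\]
while each normalizer $Z_n(x)=\sum_{h\in G^x} f_n(h)$ satisfies
\[
Z_n(x)\ge \tfrac n2\,|B_{n/2}(x)| .
\]

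Plain polynomial growth with these estimates only gives a ratio of order $L(1+n)^d/\big(n\,|B_{n/2}|\big)$, which need not tend to $0$. This is the main obstacle: in groups one uses that a polynomially bounded sequence has subsequences with ratio $|B_{n+L}|/|B_n|\to 1$. In groupoids the subsequence must work for all $x$ at once.

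My first attempt is to average over scales: $\nu_N=\frac1N\sum_{n\le N}\mu_n$. Equivalently, I would use the weight $f(g)=\max(0,N-\ell(g))^{k}$ for a large power $k>d$. This turns the sum $\sum_h |f(\gamma h)-f(h)|$ into a telescoping "derivative" of order $N^{k-1}L\cdot\#B_N$. The normalizer $Z_N$ is instead comparable to $\sum_{n\le N}(N-n)^{k-1}\,|B_n|$. With these in hand, I would prove a uniform statement: for every $\epsilon$ and $L$ there is $N$ such that for all $x$
\[
\sum_{n}\big(|B_{n+L}(x)|-|B_n(x)|\big)\,w_N(n)\le \epsilon \sum_n |B_n(x)|\,w_N(n),
\]
where $w_N$ is the weight. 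This should follow from summation by parts together with the two-sided estimate $1\le|B_n(x)|\le C(1+n)^d$, since a bounded-degree polynomial envelope forces averaged logarithmic growth $\log|B_N|/\log N\le d$ uniformly in $x$. Comparing $\|\mu^{x}-\gamma\mu^{y}\|_1$ with the relevant normalizers $Z(x)$ and $Z(y)$, which differ by a factor $1+O(\epsilon)$ by the same estimate, then gives $\sup_{\gamma\in K}\|\gamma\mu^{s(\gamma)}-\mu^{r(\gamma)}\|_1\to0$. This yields topological amenability.
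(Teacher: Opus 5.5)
Your direct Reiter-type route differs genuinely from the paper's, and its analytic core is sound. However, the passage to \emph{topological} amenability has a real gap.

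\textbf{The gap.} You start from ``a continuous length function for which $G$ has polynomial growth'' and use ``properness of $\ell$'' to get continuity of $x\mapsto\mu_n^x$. Neither property is available.
\begin{enumerate}
\item Polynomial growth (Definition \ref{def:gpd-poly-growth}) is witnessed by some uniformly fibrewise coarse $\ell$. Such an $\ell$ need not be continuous, proper, or even Borel, and there is no evident way to trade it for a continuous one with polynomial growth.
\item The natural substitute is the word length $\ell_K$ of an open precompact symmetric generating set. It is integer valued and only upper semicontinuous, so your $f_n$ and $Z_n$ are not continuous.
\item Even for continuous $\ell$, the support of $f_n$ contains $\Go$, so $f_n$ is not compactly supported. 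Without properness, the fibre sums $x\mapsto\sum_{h\in G^x}f_n(h)$ need not be continuous.
\item The fallback via positive type functions in $C_c(G)$ meets the same obstruction.
\end{enumerate}

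\textbf{The repair.} Aim for Borel amenability and then invoke \cite[Corollary 2.15]{RenaultAmenability}, which the paper records in Section 2. By Lemmas \ref{lemma:cpct-gen-subgroupoid-amenable} and \ref{lemma:arbitrary-pg-implies-wordlength-pg} you may assume $\ell=\ell_K$. Its balls are open, so $\ell$ is Borel. Set $\phi_N(t)=\max(0,N-t)^k$, $Z_N(x)=\sum_{\gamma\in G^x}\phi_N(\ell(\gamma))$ and $g_N(\gamma)=\phi_N(\ell(\gamma))/Z_N(r(\gamma))$. Using a countable cover by bisections, $g_N$ is Borel, and its fibre sums equal $1$.

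\textbf{The uniform estimate.} You only assert this, and it is true, but the mechanism is not really averaged logarithmic growth. Fix $\mu$ with $\ell(\mu)\le L$ and put $y=s(\mu)$. The defect is at most $2D/Z_N(y)$, where $D=\sum_{\gamma\in G^y}\vert\phi_N(\ell(\gamma))-\phi_N(\ell(\mu\gamma))\vert$. Monotonicity and convexity of $\phi_N$ give
\[
D\le Lk\sum_{\gamma\in G^y,\ \ell(\gamma)<N+L}\bigl(N+L-\ell(\gamma)\bigr)^{k-1}.
\]
Split this sum at $\ell(\gamma)\le N-M$.
\begin{enumerate}
\item The terms with $\ell(\gamma)\le N-M$ contribute at most $Lk(1+L/M)^{k-1}M^{-1}Z_N(y)$.
\item The remaining boundary layer contributes at most $LkC(1+N+L)^d(M+L)^{k-1}$.
\item The unit $y$ gives $Z_N(y)\ge\phi_N(0)=N^k$.
\end{enumerate}
Take $k>\max(d,1)$ and $M=N^\alpha$ with $0<\alpha<(k-d)/(k-1)$. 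Then both ratios tend to $0$, uniformly in $y$ and in $\mu$ with $\ell(\mu)\le L$. Your log-growth heuristic is instead what makes the plain average $\frac1N\sum_{n\le N}\mu_n$ work: each defect is $O(\log(Z_{n+2L}/Z_n))$, and these terms telescope. That variant even needs only $\log\gamma_\ell(t)/t\to0$.

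\textbf{Comparison with the paper.} The paper never builds densities on $G$ itself.
\begin{enumerate}
\item It writes $G$ as an extension of $\mathcal{R}_G$ by $\Iso(G)$.
\item It gets amenability of the isotropy part from weak containment plus inner exactness, using spectral invariance of $S(G,\ell)$ and \cite{DelarocheExactGpds26}.
\item It gets Borel amenability of $\mathcal{R}_G$ from hyperfiniteness of the polynomial-growth orbital graph \cite{BernshteynYu2025}.
\item It glues the two via extension results for measured groupoids.
\end{enumerate}
Once repaired, your argument is elementary and self-contained. It handles the isotropy and orbit directions simultaneously, and it never has to treat $\Iso(G)$ on its own, which is generally not \'etale. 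The paper's route relies on heavy external input, but in exchange it yields extra information: weak containment and inner exactness (Proposition \ref{prop:SSG-WP-IE}), and hyperfiniteness of $\mathcal{R}_G$.
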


The lack of homogeneity in the fibrewise Cayley graph of a groupoid provides a key obstacle to proving that polynomial growth implies topological amenability. 
In order to overcome this obstacle, we view an \'etale groupoid $G$ as a Borel groupoid, realized as an extension of the orbit equivalence relation groupoid $\mathcal{R}_G=\{(r(\gamma),s(\gamma))\mid \gamma\in G\}$
by ${\rm Iso}(G)$, and analyze these two cases separately. 
Here 
${\rm Iso}(G)$ is the isotropy subgroupoid of $G$. 
Using recently constructed spectrally invariant subalgebras of $\Cr(G)$ from \cite{AOP2026}, we show that any group bundle with polynomial growth is topologically amenable.
We then relate the structure of 
$\mathcal{R}_G$ 
to an equivalence relation on the orbital graph of $G$. By invoking state-of-the-art results concerning hyperfinite equivalence relations from \cite{BernshteynYu2025} we are able to deduce Borel amenability of  
$\mathcal{R}_G$, 
which suffices for our purposes.

When an \'etale groupoid $G$ has polynomial growth with respect to a length function $\ell$, the sets $B(n)=\{g\in G\mid \ell(g)\leq n\}$ exhibit behaviour reminiscent of (fibrewise) F{\o}lner sets. This fact can be used to define variants of Banach densities, which allow us to produce an abundance of $G$-invariant measures on $\Go$. In light of this it becomes natural to wonder whether \'etale groupoids with polynomial growth satisfy comparison. 

Recall that heuristically, a subset $A\subseteq \Go$ is subequivalent to a subset $B\subseteq \Go$, written $A\precsim B$, if one can cut $A$ into finitely many pieces and translate each piece using the dynamics of $G$ so that the images are disjoint subsets of $B$. 
We say that an \'etale groupoid $G$ has comparison, if this subequivalence relation is determined by the invariant positive regular Borel probability measures on the unit space $\Go$. On a technical level, it is often easier to verify a variant of comparison coined weak $m$-comparison. We refer the reader to Definition \ref{def:dynamical-comparison} for precise definitions. 
The comparison property has many desirable consequences.
In the context of transformation groupoids for example, Downarowicz and Zhang \cite{DownarowiczZhang23} relate the comparison property to the problem of a lossless digitalization of a dynamical system by extending it to a subshift on finitely many symbols. 
The comparison property is also popular among the $C^*$-algebra community, where it serves as a dynamical analogue of the strict comparison property appearing in the structure theory of nuclear $C^*$-algebras. In this context, Kerr's work \cite{KerrDimensionComparisonAlmostFiniteness} promoted the idea to develop a dynamical analogue of the Toms-Winter conjecture. Further work in this direction has subsequently been carried out by a variety of authors, see for example \cite{KerrSzaboAlmostfinite2020,LiaoTikuisisAFComparisonTracialZstability,KopsacheilisWinter}. 
Verifying the comparison property is often non-trivial. Downarowicz and Zhang \cite{DownarowiczZhang23} showed that all actions of groups with locally subexponential growth on zero-dimensional compact spaces have comparison. 
Naryshkin \cite{NaryshkinComparison2022} showed that actions of finitely generated groups of polynomial growth on compact metrizable spaces have weak $m$-comparison. Our second main result extends \cite{NaryshkinComparison2022} to compactly generated \'etale groupoids of polynomial growth.

\begin{introtheorem}[cf. Theorem \ref{thm:pg-implies-dyn-comp}]\label{introtheorem:pg-implies-dyn-comp}
    Suppose $G$ is a compactly generated locally compact Hausdorff \'etale groupoid with compact and metrizable unit space.
    If $G$ has polynomial growth,  
    then
    there exists an $m \in \N \cup \{0\}$ such that $G$ has weak $m$-comparison. 
\end{introtheorem}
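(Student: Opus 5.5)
The plan is to adapt Naryshkin's argument for actions of groups of polynomial growth to the groupoid setting, using a compact symmetric generating set $K$ and the associated length function $\ell$. Write $B(n)=\{g\in G\mid \ell(g)\le n\}$ and $B_x(n)=B(n)\cap r^{-1}(x)$. Polynomial growth gives constants $C,d$ with $|B_x(n)|\le Cn^d$ uniformly in $x\in\Go$.

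The first step is a pigeonhole doubling lemma. Since $|B_x(n)|$ grows at most polynomially and $|B_x(1)|\ge 1$, for each $x$ there are many scales $n$ with $|B_x(5n)|\le D\,|B_x(n)|$, where $D$ depends only on $C$ and $d$. Among the scales $2^k$, $k\le N$, all but boundedly many have this doubling property at any given $x$. This is the source of $m$: I would take $m$ of order $D^2$, or $5^d$ times a constant, coming from a Vitali-type covering argument.

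The second step is to fix closed $A$ and open $B$ in $\Go$ with $\mu(A)<\mu(B)$ for every $G$-invariant probability measure $\mu$. Weak $m$-comparison then asks for $A\precsim B$ in an $(m+1)$-fold sense, and I would reduce to $A\precsim B$ directly. By compactness of the space of invariant measures, together with the Banach-density construction via the sets $B(n)$ mentioned in the introduction, I would get a uniform $\epsilon>0$ and $n_0$ such that for all $n\ge n_0$ and all $x$,
\[
|\{g\in B_x(n)\mid s(g)\in A\}|+\epsilon|B_x(n)|\le |\{g\in B_x(n)\mid s(g)\in B\}|.
\]
The argument is by contradiction: otherwise, the normalized counting measures $|B_{x_n}(n)|^{-1}\sum_{g\in B_{x_n}(n)}\delta_{s(g)}$ would have a weak$^*$ limit point that is invariant, because the ratio $|B_x(n+1)\setminus B_x(n-1)|/|B_x(n)|\to0$ along subsequences by polynomial growth, and that limit point would violate the hypothesis. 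Here the difficulty is that the Følner property only holds along good subsequences and not uniformly. I would handle this by choosing the scale pointwise, using the first step, and passing to the limit along those scales.

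The third step builds the subequivalence. Using a maximal $5n$-separated family of points in each orbit, chosen Borel/openly via the compact metrizable unit space, I would cover $\Go$ by "tiles" $B_x(n)$. At each tile the counting inequality gives an injection from $A$-points to $B$-points inside $B_x(5n)$, with multiplicity bounded by $m$ because of doubling. These local injections would then be realized by finitely many open bisections, obtained by perturbing via compactness of $A$ and openness of $B$, to produce the required open cover $A\subseteq\bigcup U_i$ and bisections $S_i$ with $s(S_i)\supseteq U_i$ and the images $r(S_i\cdots)$ lying in $B$ with overlap at most $m+1$.

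I expect the main obstacle to be this last step. Groupoid fibres are not homogeneous, and isotropy makes the map $g\mapsto s(g)$ on $B_x(n)$ non-injective. The counting therefore has to be done on orbit points, that is, on the orbital graph, rather than on arrows. So I would first pass to the orbit equivalence relation, comparing $|s(B_x(n))|$ with $|B_x(n)|$ up to the isotropy factor, which is constant along each fibre. After that, the continuity and openness of the constructed bisections must be checked uniformly using compactness.
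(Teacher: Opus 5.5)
Your tiling route differs from the paper's, and its third step, which carries all the weight, has a genuine gap.

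\textbf{The gap.} Step~2 uses only the pointwise hypothesis $\mu(A)<\mu(B)$ for all $\mu\in M(G)$. That is the hypothesis of $m$-comparison, not of weak $m$-comparison. It yields only a comparison inside a single ball, $|A\cap B_x(n)|+\epsilon|B_x(n)|\le|B\cap B_x(n)|$. To reach $A\precsim_m B$ you then need tile-wise injections along a Vitali cover, ``realized by finitely many open bisections by perturbing''. That realization is exactly what is unavailable on a general compact metrizable $\Go$:
\begin{itemize}
\item Centres of a maximal separated family can be chosen in a Borel way. The induced tile assignments and matchings cannot in general be made locally constant; there are no clopen tilings, e.g.\ when $\Go$ is connected.
\item So you only get Borel partial bijections. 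Enlarging their domains to open sets destroys disjointness of ranges, because nearby points of $A$ are routed through different tiles by incompatible matchings.
\item ``Overlap at most $m+1$'' is not a partition into $m+1$ families with pairwise disjoint ranges, which is what $\precsim_m$ requires.
\end{itemize}
If Steps~2--3 worked as stated, they would give $m$-comparison under the pointwise hypothesis. That is more than Naryshkin obtains even for group actions, which signals that the real difficulty sits here.

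The supporting steps are also shaky:
\begin{itemize}
\item Pigeonhole gives, at each $x$, only a positive proportion of good scales, not all but boundedly many.
\item Bounding the overlap of enlarged balls needs doubling at a common scale at neighbouring centres. Pointwise good scales do not give this.
\item The conclusion ``for all $n\ge n_0$'' in Step~2 does not follow once the F{\o}lner property is only available along $x$-dependent scales.
\end{itemize}

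\textbf{The missing idea} is how the paper uses the \emph{weak} hypothesis $\sup_\mu\mu(A)<\inf_\mu\mu(C)$.
\begin{itemize}
\item Via the Banach densities of Lemma~\ref{lemma: banach density}, it gets $N$ such that the $A$-density of \emph{any} ball $B(n_1)x_1$ is below the $C$-density of \emph{any} ball $B(n_2)x_2$ for $n_1,n_2\ge N$. This is a comparison across different radii and base points, which your same-ball inequality lacks.
\item It picks $M\ge N$ with $\sup_x|B(2M)x|\le(2^{\mathrm{ord}}+1)\sup_x|B(M)x|$ (Lemma~\ref{lemma:pg-groupoids-are-doubling-at-scale}). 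Using $B(M)^{-1}B(M)=B(2M)$, it derives $|\{\gamma\in D^{-1}Dx\mid r(\gamma)\in A\}|<m\,|\{\gamma\in Dx\mid r(\gamma)\in C\}|$ for $D=B(M)$.
\item Lemma~\ref{Lemma: m-comparison} is a Downarowicz--Zhang/Naryshkin greedy algorithm. It cycles through finitely many open bisections covering $D$ and $m$ colours, shrinking $\varepsilon_n$ at each step. This turns the counting inequality directly into open sets implementing $A\precsim_{m-1}C$, with disjointness inside each colour built in.
\end{itemize}
No tiling, Borel selection, or passage to the orbital graph is needed. Arrows are counted with multiplicity, so isotropy causes no trouble.

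\textbf{A caveat on uniformity.} You are right that $m$ must not depend on $A$ and $C$. The paper's $m=(2^{\mathrm{ord}}+1)\sup_x|B(M)x|$ depends on $M$, which is chosen after $A$ and $C$. For groups, $|B(M)x|$ is independent of $x$ and this factor cancels. So, as written, the paper's argument does not deliver that uniformity either.
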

While the proof of Theorem \ref{introtheorem:pg-implies-dyn-comp} attempts to follow the philosophy of \cite{NaryshkinComparison2022}, we will 
find the lack of homogeneity in the fibrewise Cayley graph to be an obstacle, 
and a great deal of additional care is needed to circumvent this. 
We note that
one of the most useful consequences of the homogeneity of the Cayley graph of a group is that polynomial growth automatically implies the formally much stronger property of bounded doubling. 
There is no reason to believe that this still holds for groupoids. One still has bounded doubling at arbitrarily large scales though (cf.~ Lemma \ref{lemma:pg-groupoids-are-doubling-at-scale}), which turns out to be enough for our purposes.  

Matui's AH-conjecture \cite[Conjecture 2.9]{MatuiAHHK16} proposes a relationship between the first two homology groups of a minimal ample  groupoid and the abelianisation of its topological full group. There are currently no known counterexamples.
Li showed that one should interpret the AH-conjecture as the degree one component of a much deeper relation between homological invariants of topological full groups and the homology of ample groupoids, and verified this relationship (and in particular the AH-conjecture) for minimal ample groupoids with comparison \cite{LiInfLoopSpaces25}. 
By adapting results from \cite{AraBonickeBosaLi23} and combining them with
Theorem \ref{introtheorem:pg-implies-dyn-comp}, we deduce that compactly generated minimal ample groupoids with polynomial growth fall within the purview of the main results in \cite{LiInfLoopSpaces25}. In particular, we obtain the following result. 

\begin{introcorollary}[cf. Corollary \ref{cor:AH-conjecture}]
Let $G$ be a compactly generated locally compact Hausdorff
    minimal ample groupoid whose unit space is compact and  metrizable and has no isolated points. 
    If $G$ has polynomial growth, then $G$ satisfies Matui's AH-conjecture.  
\end{introcorollary}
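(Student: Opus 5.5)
The statement is the introductory corollary, so I would obtain it by chaining Theorem \ref{introtheorem:pg-implies-dyn-comp} with Li's results in \cite{LiInfLoopSpaces25}. Those results verify the AH-conjecture for minimal ample groupoids satisfying a suitable comparison property. The hypotheses fit: $G$ is compactly generated, has compact metrizable unit space, and has polynomial growth. So Theorem \ref{introtheorem:pg-implies-dyn-comp} gives some $m\in\N\cup\{0\}$ such that $G$ has weak $m$-comparison. The remaining work is to upgrade weak $m$-comparison to the form of comparison Li actually uses, in the setting of minimal ample groupoids with no isolated points.

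\textbf{Step 1: pass to compact open sets.} Since $G$ is ample, $\Go$ has a basis of compact open sets, so subequivalence can be tested on compact open subsets of $\Go$. There, $A\precsim B$ means $A$ is covered by finitely many compact open bisections whose images are disjoint and lie in $B$.

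\textbf{Step 2: remove the constant $m$.} Following \cite{AraBonickeBosaLi23}, I would use minimality and the absence of isolated points to show the following. For compact open $A,B\subseteq\Go$, suppose $\mu(A)<\mu(B)$ for every invariant probability measure $\mu$. Then $B$ contains $m+1$ pairwise disjoint compact open subsets $B_0,\dots,B_m$ with $\mu(A)<\mu(B_i)$ for all $i$ and all $\mu$. Applying weak $m$-comparison yields $A\precsim B_0\sqcup\dots\sqcup B_m\subseteq B$, which is honest comparison.

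To produce the $B_i$, I would exploit the following fact. In a minimal ample groupoid with perfect unit space, every nonempty compact open set can be subdivided into many pieces of uniformly small measure, uniformly over all invariant measures. This uses compactness of the set of invariant measures (weak$^*$) and the strict inequality, which gives a uniform gap $\inf_\mu(\mu(B)-\mu(A))>0$.

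I expect this step to be the main obstacle. The uniform-gap and uniform-subdivision arguments in \cite{AraBonickeBosaLi23} are phrased for their specific framework. I would need to check that they require only minimality, ampleness, second countability and a perfect unit space, rather than anything specific to their context, and adapt the proofs accordingly.

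\textbf{Step 3: apply Li.} Having obtained comparison in the sense of \cite{LiInfLoopSpaces25}, I would invoke Li's main theorem for minimal ample groupoids with comparison, which yields the AH-conjecture for $G$. I would also check whether Li assumes effectiveness or another standing hypothesis. If so, I expect it is either automatic or already accommodated by the hypotheses of \cite[Conjecture 2.9]{MatuiAHHK16}.
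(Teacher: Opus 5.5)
Your overall chain matches the paper's: first Theorem \ref{introtheorem:pg-implies-dyn-comp}, then an upgrade from weak $m$-comparison to comparison, then \cite[Corollary E]{LiInfLoopSpaces25}. The problem is your Step 2, which is where the real content lies; it does not work as stated.

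\textbf{The sets in Step 2 cannot exist.} Suppose $B_0,\dots,B_m\subseteq B$ are pairwise disjoint with $\mu(A)<\mu(B_i)$ for all $i$. Then $\mu(B)\geq\sum_i\mu(B_i)>(m+1)\mu(A)$. The hypothesis only gives $\mu(A)<\mu(B)$. So with $m\geq 1$ and, say, $\mu(A)=0.4$ and $\mu(B)=0.5$ for some $\mu$, no such sets exist, and no uniform subdivision of $B$ changes this.

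\textbf{Two further problems.} First, weak $m$-comparison requires $\sup_\mu\mu(A)<\inf_\mu\mu(B_0)$, not the pointwise inequality you state. Second, it only yields $A\precsim_m B_0$: $m+1$ families of bisections, each landing disjointly in the \emph{same} set $B_0$. To spread these families over $B_0,\dots,B_m$, the $B_i$ must be images of one common set under bisections. Measure inequalities alone do not provide this.

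\textbf{The missing idea.} Weak $m$-comparison should be applied only to a small remainder of $A$, not to $A$ itself. The paper's Lemma \ref{lemma: weak m-comp implies comparison} proceeds as follows.
\begin{enumerate}
\item Take the uniform gap $\varepsilon$. By minimality, find $\gamma_0,\dots,\gamma_m$ with a common source and distinct ranges in $B$.
\item Choose compact open bisections $S_i\ni\gamma_i$ with common source $U$, $\sup_\mu\mu(U)<\varepsilon/(m+1)$, and pairwise disjoint ranges $r(S_i)\subseteq B$.
\item Set $B_1:=B\setminus\bigsqcup_i r(S_i)$. This still satisfies $\mu(B_1)>\mu(A)+\delta$ for some uniform $\delta>0$.
\item Run a greedy transfer along a countable cover by compact open bisections, moving pieces of $A$ disjointly into $B_1$.
\item The untransferred part $A_\infty$ is null for every invariant measure. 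This uses $GA_\infty\cap B_\infty=\emptyset$, $\mu(B_\infty)\geq\delta$, and the ergodic decomposition.
\item Dini's theorem on the compact set $M(G)$ then gives $k$ with $\sup_\mu\mu(A_k)<\inf_\mu\mu(U)$. The right-hand side is positive by minimality.
\item Only now apply weak $m$-comparison, to get $A_k\precsim_m U$. Composing the $i$-th family with $S_i$ gives $A_k\precsim\bigsqcup_i r(S_i)=B\setminus B_1$.
\item Combined with $A\setminus A_k\precsim B_1$ from the greedy step, this gives $A\precsim B$.
\end{enumerate}
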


\subsubsection*{Acknowledgments} The second author would like to thank Hannes Thiel for suggesting to consider comparison in the context of groupoids with polynomial growth. 
The first author wishes to thank Newcastle University for its hospitality during a research stay in February 2026, during which parts of this research was conducted. 
The first author was funded by the Research Council of Norway [project 324944]. 

During the preparation of this work, the authors used Open AI's GPT 5.5 in order to assist with proofreading, and checking for mathematical consistency. No AI tools were used to generate scientific content. All authors take full responsibility for the content of the final text.

\section{\texorpdfstring{\'Etale}{Étale} groupoids and amenability}
We begin by 
presenting
basic terminology 
and results 
about groupoids, amenability, and growth. We refer the reader to \cite{DelarocheAmenability} and \cite{SimsSzaboWilliams2020} for the basic notions on groupoids and amenability.

Let $G$ be a groupoid with unit space $\Go$, and denote by $G^{(2)}$ its composable elements. The range and source maps $r,s \colon G \to \Go$ will be denoted $r$ and $s$, respectively. 
We denote the fibers of the range and source maps by $G^x = r^{-1}(x)$ and $G_x = s^{-1}(x)$ for $x \in \Go$. 
Then the isotropy group at $x \in \Go$ is $G_x^x := G^x \cap G_x$, and the \emph{isotropy bundle} is given by $\Iso(G) := \coprod_{x\in \Go}G_x^x$. 
The isotropy bundle is a subgroupoid of $G$ in a natural way.  
Furthermore, for subsets $A, B \subseteq G$, we denote the set of all possible products of elements in $A$ and elements in $B$ by $AB = \{\gamma\mu \in G \mid \gamma \in A, \mu \in B, (\gamma,\mu) \in G^{(2)}\}$. 
A subset $F \subseteq \Go$ is \emph{invariant} if $s(\gamma) \in F$ if and only if $r(\gamma) \in F$. For such a set, we let $G(F)$ denote the subgroupoid of $G$ given by $r^{-1}(F) \cap s^{-1}(F)$. 

The groupoid $G$ is said to be \emph{Borel} if it is equipped with a Borel structure such that the product, inverse, range, and source maps are Borel. Here we equip $G^{(2)}$ with the Borel structure inherited from $G \times G$, and $\Go$ with the Borel structure induced by $G$. 
We say $G$ is a \emph{topological} groupoid if it is equipped with a topology such that the product, inverse, range and source maps are continuous. Again, we equip $G^{(2)}$ with the relative topology from $G \times G$, and $\Go$ with the topology  induced by $G$. Any topological groupoid we consider in this article will be locally compact and Hausdorff. Moreover, we note that  any topological groupoid is a Borel groupoid with the Borel structure naturally induced by the topology. 

A morphism between Borel groupoids $G$ and $H$ is a Borel map $\phi \colon G \to H$ such that $\phi(G^{(2)} ) \subseteq H^{(2)}$, $\phi(\gamma\eta)=\phi(\gamma)\phi(\eta)$ for all $(\gamma,\eta)\in G^{(2)}$, and $\phi(\gamma^{-1})=\phi(\gamma)^{-1}$ for all $\gamma\in G$. We say $\phi$ is \emph{strongly surjective} if $\phi|_{\Go} \colon \Go \to H^{(0)}$ is surjective and for every $x \in \Go$ we have $\phi(G^{x}) = H^{\phi(x)}$. 

A locally compact Hausdorff groupoid $G$ is said to be \'etale if the range map $r$ (equivalently, the source map $s$) is a local homeomorphism onto $\Go$. In particular, $G^x$ and $G_x$ are discrete with respect to the subspace topology. When $G$ is \'etale, there is a canonical Haar system on $G$ given by the counting measure on $G^x$ for all $x\in G^{(0)}$. 

For $G$ \'etale, we consider the $*$-algebra $C_c(G)$ of continuous functions of compact support. It is equipped with multiplication and involution given by
\begin{align}\label{eq:Cc-convolution-and-involution}
    f*g (\gamma) = \sum_{\mu \in G_{s(\gamma)}} f(\gamma \mu^{-1}) g(\mu), \quad \text{ and } \quad f^* (\gamma) = \overline{f(\gamma^{-1})},
\end{align}
for $f,g\in C_c(G)$ and $\gamma \in G$. We will want to consider completions of this algebra. First, the \emph{$I$-norm} on $C_c(G)$ is given by
\begin{align*}
    \Vert f \Vert_I := \max \left\{ \sup_{x \in \Go} \sum_{\gamma \in G_x} \vert f(\gamma) \vert , \sup_{x \in \Go} \sum_{\gamma \in G^x} \vert f(\gamma) \vert \right\}
\end{align*}
for $f \in C_c(G)$. This is a norm on $C_c(G)$, and the associated completion is a Banach $*$-algebra denoted by $I(G)$. The multiplication and involution are given by the natural extensions of \eqref{eq:Cc-convolution-and-involution}. 
For each $x \in \Go$ we consider the Hilbert space $\ell^2(G_x)$.
The convolution formula from \eqref{eq:Cc-convolution-and-involution} extends to a representation of $I(G)$ on $\ell^2(G_x)$, the \emph{left regular representation at $x \in \Go$}, given by
\begin{align*}
    \lambda_x (f)(\xi) (\gamma) = \sum_{\mu \in G_x} f(\gamma \mu^{-1} ) \xi(\mu)
\end{align*}
for $\gamma \in G_x$, $f \in I(G)$ and $\xi \in \ell^2(G_x)$. The \emph{reduced groupoid $C^*$-algebra of} $G$, denoted by $\Cr (G)$, is the completion of $I(G)$ in the norm
\begin{align*}
    \Vert f \Vert_{\Cr(G)} := \sup_{x \in \Go} \Vert \lambda_x (f) \Vert_{B(\ell^2(G_x))} \quad \text{ for $f \in I(G)$.}
\end{align*}
Lastly, there is also the \emph{full groupoid $C^*$-algebra of} $G$. It is the enveloping $C^*$-algebra of $C_c(G)$ and $I(G)$, and may be obtained as the completion in the norm
\begin{align*}
    \Vert f \Vert_{C^*(G)} := \sup \{ \Vert \pi(f) \Vert_{B(\mathcal{H}_\pi )} \mid \pi \colon C_c(G) \to B(\mathcal{H}_\pi) \text{ is a $*$-representation} \} 
\end{align*}
for $f \in C_c(G)$. 
If the identity map on $C_c(G)$ extends to an isomorphism $C^*(G) \cong \Cr(G)$, we say that $G$ has the \emph{weak containment property}.

The proof of the first main result, Theorem \ref{thm:PG-implies-amenable}, will involve various notions of amenability of \'etale groupoids, and we therefore proceed to discuss this concept. Important for our proof in Section \ref{sec:pg-implies-amenability} is the fact that Borel amenability and topological amenability are equivalent for second-countable locally compact Hausdorff \'etale groupoids by \cite[Corollary 2.15]{RenaultAmenability}. For the benefit of the reader we recall both definitions, cf.\  \cite{RenaultAmenability}.

\begin{definition}
    Let $G$ be a locally compact Hausdorff \'etale groupoid. We say $G$ is \emph{topologically amenable} if there 
    exists a \emph{topological approximate invariant density}, that is, a sequence $(g_n)_n$ of non-negative continuous functions on $G$ such that
    \begin{enumerate}
        \item $\sum_{\gamma \in G^x} g_n(\gamma) \leq 1$ for all $x\in \Go$ and all $n \in \N$,
        \item $\sum_{\gamma \in G^x} g_n(\gamma) \to 1$ uniformly on every compact subset of $\Go$,
        \item $ \sum_{\gamma \in G^{r(\mu)}} \vert g_n(\mu^{-1}\gamma ) - g_n(\gamma) \vert \to 0$ uniformly on every compact subset of $G$.
    \end{enumerate}
    We say $G$ is \emph{Borel amenable} if there is a \emph{Borel approximate invariant density}, that is, a sequence $(g_n)_{n\in \N}$ of non-negative Borel functions on $G$ such that
    \begin{enumerate}
        \item $\sum_{\gamma \in G^x} g_n(\gamma) \leq 1$ for all $x\in \Go$ and all $n \in \N$,
        \item $\lim_{n\to \infty} \sum_{\gamma \in G^x} g_n(\gamma) =1$ for all $x \in \Go$,
        \item $\lim_{n\to \infty} \sum_{\gamma \in G^{r(\mu)}} \vert g_n(\mu^{-1}\gamma ) - g_n(\gamma) \vert = 0$ for all $\mu \in G$. 
    \end{enumerate}
\end{definition}\label{def:Groupoid-amenability}

Lemma \ref{lemma:Borel-amenable-extension} below will be important for the proof of Theorem \ref{thm:PG-implies-amenable}. While 
the proof of Lemma \ref{lemma:Borel-amenable-extension}
follows by combining existing results in the literature, we include the following definition for the benefit of the reader. 

\begin{definition}
    Suppose $G$ is a Borel groupoid with Haar system $\lambda$ given by the counting measure. A measure $\mu$ on $\Go$ is said to be \emph{quasi-invariant with respect to $(G,\lambda)$} if the measure $\mu \circ \lambda$ on $G$ is equivalent to its inverse $(\mu \circ \lambda)^{-1}$. 
    The triple $(G,\lambda, \mu)$ is called a \emph{measured groupoid}. 
\end{definition}

\begin{lemma}\label{lemma:Borel-amenable-extension}
  Let $G$ be a second-countable locally compact Hausdorff \'etale groupoid, and let $\lambda$ be the counting measure. Moreover, let $G_1$ and $G_2$ be two Borel groupoids equipped with Haar measures $\lambda_1$ and $\lambda_2$ given by the respective counting measures. Suppose
  \begin{center}
    \begin{tikzcd}
1 \arrow[r] & G_1 \arrow[r, "\iota"] & G \arrow[r, "p"] & G_2 \arrow[r] & 1
    \end{tikzcd}  
  \end{center}
is a short exact sequence of Borel groupoids, where we furthermore assume that $p$ is strongly surjective. If $G_1$ and $G_2$ are Borel amenable, then $G$ is topologically amenable. 
\end{lemma}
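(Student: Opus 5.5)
The plan is to prove Borel amenability of $G$ directly and then invoke \cite[Corollary 2.15]{RenaultAmenability} to upgrade to topological amenability. The paper says the lemma ``follows by combining existing results in the literature'' and includes the definition of measured groupoids. That suggests the intended route goes through measurewise amenability. We use the equivalence, for second-countable locally compact Hausdorff étale groupoids, between Borel amenability, topological amenability and measurewise amenability, meaning amenability of $(G,\lambda,\mu)$ for every quasi-invariant measure $\mu$ on $\Go$; see \cite{RenaultAmenability} and \cite{DelarocheRenaultAmenableGpds2000}. So it suffices to show that $(G,\lambda,\mu)$ is amenable as a measured groupoid for every quasi-invariant probability measure $\mu$ on $\Go$.

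Fix such a $\mu$ and push it forward along $p|_{\Go}$ to a measure $\mu_2 = (p|_{\Go})_*\mu$ on $G_2^{(0)}$. The first step is to check that $\mu_2$ is quasi-invariant for $(G_2,\lambda_2)$. Here strong surjectivity is what we need: since $p(G^x)=G_2^{p(x)}$, the measure $\mu\circ\lambda$ on $G$ is carried by $p$ onto a measure in the class of $\mu_2\circ\lambda_2$. The same holds for its inverse, so equivalence of $\mu\circ\lambda$ and $(\mu\circ\lambda)^{-1}$ passes to $G_2$. I also need the measure class of $\mu_2\circ\lambda_2$ to be genuinely that of $p_*(\mu\circ\lambda)$ rather than merely dominated by it. For this I would use a Borel disintegration of $p$ along fibres, available because $G$ is standard Borel and $p$ is countable-to-one on range fibres only modulo $G_1$.

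Since $G_2$ is Borel amenable, the density $(g_n)$ witnesses measurewise amenability of $(G_2,\lambda_2,\mu_2)$. Similarly, $G_1$, which sits inside $G$ with unit space identified with $\Go$ (exactness means $G_1 = \ker p$, containing $\Go$), is Borel amenable. Hence $(G_1,\lambda_1,\mu)$ is amenable; quasi-invariance of $\mu$ for $G_1$ follows since $\mu\circ\lambda_1$ is the restriction of $\mu\circ\lambda$ to the $\iota$-invariant subset $\iota(G_1)$.

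Then I apply the extension theorem for measured groupoids, \cite[Proposition 5.3.14]{DelarocheRenaultAmenableGpds2000}: if $p$ is a strongly surjective morphism of measured groupoids, $(G_2,\mu_2)$ is amenable, and the kernel $(G_1,\mu)$ is amenable, then $(G,\mu)$ is amenable. As $\mu$ was arbitrary, $G$ is measurewise amenable, hence Borel and topologically amenable by \cite[Corollary 2.15]{RenaultAmenability}.

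An alternative, direct route avoids measures. Build a Borel approximate invariant density on $G$ by ``multiplying'' densities: for $\gamma\in G^x$, set
\[ h_n(\gamma)=g^{(2)}_n(p(\gamma))\, g^{(1)}_n(\sigma(p(\gamma))^{-1}\gamma). \]
Here $\sigma$ is a Borel section $G_2\to G$ of $p$ over range fibres, obtained from a measurable selection theorem since $p$ is strongly surjective and fibres are countable. I would keep this as a fallback only, because controlling the cocycle error from $\sigma$ requires a diagonal choice of indices.

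The main obstacle is the measure-theoretic bookkeeping in the first route. We must verify that the pushforward measure class is quasi-invariant, and check that the hypotheses of the cited extension result (strong surjectivity, and matching Haar systems via counting measures) are exactly met in the Borel setting, where $G_1,G_2$ carry no topology.
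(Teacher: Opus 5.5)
Your main route is the paper's proof: you fix an arbitrary quasi-invariant $\mu$, transport it to $G_1^{(0)}$ and $G_2^{(0)}$, get amenability of the measured groupoids $(G_i,\lambda_i,\mu_i)$ from Borel amenability, and apply the extension theorem \cite[Theorem 5.3.14]{DelarocheRenaultAmenableGpds2000}; you then pass from measurewise to topological amenability, for which the paper cites \cite[Theorem 3.3.7]{DelarocheRenaultAmenableGpds2000}. The one soft spot is your quasi-invariance check for $\mu_2$, which needs no disintegration: strong surjectivity gives every $\eta\in G_2^{p(x)}$ at least one preimage in $G^x$, so $p_*(\mu\circ\lambda)\geq \mu_2\circ\lambda_2$ and the two measures have the same null sets (the paper simply cites \cite[Proposition 5.3.10]{DelarocheRenaultAmenableGpds2000} here).
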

\begin{proof}
    First note that $\iota$ and $p$ restrict to homeomorphisms of the unit spaces $G_1^{(0)} \cong \Go$ and $G_2^{(0)} \cong \Go$. 
    Now fix any quasi-invariant measure $\mu$ on $\Go$ for $(G,\lambda)$.
    Denote by $\mu_1$ and $\mu_2$ the measures induced on $G_1^{(0)}$ and $G_2^{(0)}$ through $\iota$ and $p$ respectively. By \cite[Proposition 5.3.10]{DelarocheRenaultAmenableGpds2000} it follows that $\mu_i$ is quasi-invariant for $(G_i, \lambda_i)$ for $i=1,2$. 
    Since $G_i$ is Borel amenable, it follows by \cite[Proposition 3.3.2]{DelarocheRenaultAmenableGpds2000} that for $i=1,2$ $(G_i, \lambda_i, \mu_i)$ is an amenable measured groupoid in the sense of \cite[Definition 3.2.8]{DelarocheRenaultAmenableGpds2000}. 
    Then \cite[Theorem 5.3.14]{DelarocheRenaultAmenableGpds2000} implies that $(G, \lambda, \mu)$ is an amenable measured groupoid. Since $\mu$ was an arbitrary quasi-invariant measure, \cite[Theorem 3.3.7]{DelarocheRenaultAmenableGpds2000} yields that $G$ is topologically amenable.  
\end{proof}

We proceed to discuss length functions and growth of \'etale groupoids. 
\begin{definition}\label{def:length-function}
    Let $G$ be  
    a locally compact Hausdorff 
    \'etale groupoid. We say that a function $\ell \colon G \to [0, \infty)$ is a \emph{length function on }$G$ if
    \begin{enumerate}
        \item $\ell(\gamma) = 0$ for all $\gamma \in \Go$,
        \item $\ell (\gamma^{-1}) = \ell(\gamma)$ for all $\gamma \in G$, and
        \item $\ell(\gamma \mu) \leq \ell(\gamma) + \ell(\mu)$ for all $(\gamma, \mu) \in G^{(2)}$. 
    \end{enumerate}
For any $t \geq 0$, we denote by $B_\ell(t)$ the set
\begin{align*}
    B_\ell(t) := \{ \gamma \in G \mid \ell(\gamma) \leq t \}.
\end{align*}
  Then $\ell$ is said to be \emph{controlled} if $\sup_{\gamma\in K}\ell(\gamma)<\infty$ for all precompact sets $K\subseteq G$, and we say that $\ell$ is 
  \emph{uniformly fibrewise proper} if 
  \begin{align*}
      \sup_{x \in \Go} \vert B_\ell(t) x \vert < \infty 
  \end{align*}
  for all $t \geq 0$. 
  We say that $\ell$ is a  
  \emph{uniformly fibrewise coarse} 
  length function if it is both controlled and uniformly fibrewise proper.
\end{definition}
    
\begin{remark}
    In  \cite{MaWu2020} a length function $\ell \colon G \to [0,\infty)$ is said to be \emph{coarse} if it is controlled and if whenever $K\subseteq G\setminus G^0$ satisfies $\sup_{\gamma \in K} \ell(\gamma)<\infty$, then $K$ is precompact.  
    A length function satisfying the latter condition is said to be \emph{proper}. It is not difficult to see that a proper length function is uniformly fibrewise proper: By properness of $\ell$ we have that $B_\ell(t)\setminus \Go$ is precompact for all $t \geq 0$.  
    Since the system of counting measures forms a continuous Haar system on $G$, see \cite[Exercise~1.3.7]{WilliamsBook}, we therefore get $\sup_{x \in \Go} \vert B_\ell(t) x \vert < \infty$, and thus $\ell$ is uniformly fibrewise proper. It is not true that a uniformly fibrewise proper length function is necessarily proper. 
    
    It follows that a coarse length function in the sense of \cite{MaWu2020} is uniformly fibrewise coarse. In particular, \cite[Theorem~4.10]{MaWu2020} shows that every $\sigma$-compact locally compact Hausdorff \'etale groupoid admits a uniformly fibrewise coarse  continuous length function.  
\end{remark}

\begin{definition}\label{def:function-poly-growth}
    We say that a function $f \colon [0,\infty) \to [0, \infty)$ has \emph{polynomial growth} if there is a real-valued polynomial $p$ for which $f(t) \leq p(t)$ for all $t \in [0,\infty)$. We then write $f \preceq p$. The \emph{order of polynomial growth} for $f$ is
\begin{align}\label{eq:def-degree-of-PG}
    \mathrm{ord}_f := \inf \{ d \mid f \preceq p \text{ where $p$ is a polynomial of degree $d$} \}. 
\end{align}
\end{definition}
Given a uniformly fibrewise coarse length function $\ell:G\to [0,\infty)$ on 
a locally compact Hausdorff 
\'etale groupoid, there is an associated \emph{growth function} $\gamma_\ell \colon [0,\infty) \to [0,\infty)$ given by
 \begin{align}\label{eq:growth-function}
     \gamma_\ell(t) &= \sup_{x \in \Go} \vert B_\ell(t)x \vert. 
 \end{align}

 \begin{definition}\label{def:gpd-poly-growth}
    A locally compact Hausdorff \'etale 
    groupoid $G$ is said to have \emph{polynomial growth} if it admits a 
    uniformly fibrewise coarse
    length function $\ell$  such that the associated growth function $\gamma_\ell$ satisfies $\gamma_\ell \preceq p$ for some polynomial $p$.  
The \emph{order of polynomial growth } for $G$ is then $$\mathrm{ord}(G) := \inf \mathrm{ord}_{\gamma_\ell},$$ where the infimum is taken over all  
uniformly fibrewise coarse 
length functions $\ell$ for which $\gamma_\ell$ has polynomial growth.
 \end{definition}
Note that if $H \subseteq G$ is a subgroupoid and $G$ has polynomial growth, then $H$ has polynomial growth as well: just take a length function with polynomial growth for $G$ and restrict it to $H$. 

A distinguished class of length functions which we will be particularly interested in come from generating sets for the groupoid $G$. Given a symmetric set $K = K^{-1}$ which generates $G$ in the sense that $G = \cup_{n \in \N} K^n$ we obtain a length function $\ell_K$ on $G$ by the formula
\begin{align}\label{eq:def-cpct-gen-length-function}
    \ell_K (\gamma) = \begin{cases}
        0 & \text{if $\gamma \in \Go$} \\
        \min \{n \mid \gamma \in K^n \} & \text{otherwise},
    \end{cases}
\end{align}
for $\gamma \in G$. It is not hard to see that if we assume additionally that $K$ is open and precompact, then $\ell_K$ is a  
uniformly fibrewise coarse
length function. 
If $G$ is generated by a compact set, we say $G$ is \emph{compactly generated}. 
We note that compactly generated \'etale groupoids are in particular $\sigma$-compact.

We note the following result, which is well known for discrete groups. The proof in the \'etale groupoid setting is analogous to that for discrete groups, but to our knowledge it has not been noted in the literature before and so we include it for the benefit of the reader. 
\begin{lemma}\label{lemma:arbitrary-pg-implies-wordlength-pg}
    Let $G$ be a compactly generated, locally compact Hausdorff \'etale groupoid. 
    Then there exists an open, precompact set $L\subseteq G$ which generates $G$. Moreover, if $G$ has polynomial growth, then $\gamma_{\ell_L}$ has polynomial growth as well, where $\ell_L$ is given by \eqref{eq:def-cpct-gen-length-function}.
\end{lemma}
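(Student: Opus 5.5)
The plan has two parts: construct $L$, then compare $\ell_L$ with an arbitrary uniformly fibrewise coarse length function $\ell$ of polynomial growth.

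\emph{Construction of $L$.} Let $K$ be a compact generating set, and replace it by $K\cup K^{-1}$ so that it is symmetric. Since $G$ is étale, it is locally compact and $\Go$ is open in $G$. Cover the compact set $K$ by finitely many open precompact bisections and let $U$ be their union. Then $U$ is open and precompact. Put $L := U\cup U^{-1}$, which is open, precompact and symmetric. Because $K\subseteq L$, every $\gamma\in G$ lies in some $K^n\subseteq L^n$, so $L$ generates $G$. This is the claim that makes $\ell_L$ a uniformly fibrewise coarse length function, as noted before the lemma.

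\emph{Comparison of length functions.} Suppose $\ell$ is a uniformly fibrewise coarse length function with $\gamma_\ell\preceq p$ for a polynomial $p$. Since $\ell$ is controlled and $L$ is precompact, the constant
\[
C:=\sup_{\gamma\in L}\ell(\gamma)
\]
is finite. Take $\gamma$ with $\ell_L(\gamma)=n\ge 1$. Then $\gamma=\gamma_1\cdots\gamma_n$ with each $\gamma_i\in L$, and subadditivity of $\ell$ gives
\[
\ell(\gamma)\le nC=C\,\ell_L(\gamma).
\]
Units satisfy this trivially. Consequently $B_{\ell_L}(t)\subseteq B_\ell(Ct)$ for every $t\ge 0$. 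Intersecting with the fibre over $x$ and taking the supremum over $x\in\Go$ yields
\[
\gamma_{\ell_L}(t)\le\gamma_\ell(Ct)\le p(Ct),
\]
and $t\mapsto p(Ct)$ is a polynomial of the same degree. (If $C=0$, then $B_{\ell_L}(t)\subseteq B_\ell(0)$, which gives the constant bound $p(0)$.) Hence $\gamma_{\ell_L}$ has polynomial growth.

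\emph{Points needing care.} None of the steps is deep; the construction of $L$ is where care is needed. We must make sure $L$ is genuinely open and precompact, which is why we use finitely many precompact open bisections around the compact set $K$. We also need $L$ to contain $K$, so that $\bigcup_n L^n=G$; this holds because $K\subseteq U\subseteq L$.
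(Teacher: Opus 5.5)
Your proposal is correct and follows essentially the same argument as the paper. You enlarge the compact generating set to an open, precompact, symmetric $L$, then use that $\ell$ is controlled and subadditive to get $\ell\le C\,\ell_L$, hence $B_{\ell_L}(t)\subseteq B_\ell(Ct)$ and $\gamma_{\ell_L}(t)\le p(Ct)$; covering $K$ by bisections and symmetrizing $K$ first are harmless but unnecessary, since any open precompact neighbourhood of $K$ suffices.
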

\begin{proof}
    Let $K$ be a compact generating set for $G$. We may by local compactness find an open precompact set $M$ with $K \subseteq M$. Setting $L = M \cup M^{-1}$ we obtain a symmetric, open, precompact  set, which still generates $G$ since $K\subseteq L$.

    Now assume that $G$ has polynomial growth. Then there exists a 
    uniformly fibrewise coarse
    length function $\ell$ such that $\gamma_\ell$ has polynomial growth.
    We set
    \begin{align*}
        M_L := \sup_{k \in L} \ell(k),
    \end{align*}
    which is finite as $\ell$ is assumed to be controlled. 
    Note that for all  
    $k \in L$
    we then have
    \begin{align*}
        \ell(k) \leq M_L = M_L \ell_L (k).
    \end{align*}
    Suppose now $\gamma \in G$ can be written $\gamma = k_1 \cdots k_n$, $k_i \in L\setminus \Go$ for $i=1,\ldots, n$, and suppose this decomposition is minimal, that is, $\ell_L (\gamma) = n$. Then
    \begin{align*}
        \ell(\gamma) \leq \sum_{i=1}^n \ell(k_i) \leq M_L \cdot n = M_L \ell_L (\gamma).
    \end{align*}
    Thus $\ell \leq M_L \ell_L$. We then conclude that
    \begin{align*}
        B_{\ell_L}(r) \subseteq B_{\ell}(M_L \cdot r) 
    \end{align*}
    for all $r \in \R_{\geq 0}$. Thus if $\gamma_\ell$ has polynomial growth, so does $\gamma_{\ell_L}$. 
\end{proof}

\begin{remark}
    Note that the proof of Lemma \ref{lemma:arbitrary-pg-implies-wordlength-pg} shows that if $K$ and $K'$ are open precompact generating sets for $G$, then $\mathrm{ord}_{\gamma_{\ell_{K}}} = \mathrm{ord}_{\gamma_{\ell_{K'}}}$. Moreover, $\mathrm{ord}(G)$ is achieved by $\mathrm{ord}_{\gamma_{\ell_{K}}}$ for any open precompact generating set $K$. 
\end{remark}

\begin{remark}\label{remark:wordlength-vs-general-length}
	There are étale groupoids which do not admit compact generating sets for which there exist 
    uniformly fibrewise coarse
    length functions $\ell$ for which $\gamma_\ell$ has polynomial growth. As an example, many AF groupoids are not compactly generated, but any AF groupoid $G$ can be equipped with a continuous 
    uniformly fibrewise coarse
    length function $\ell$ for which $\mathrm{ord}_{\gamma_{\ell}} \leq 1$,  
    see \cite[Proposition 3.17]{AOP-JNCG}. As such, $\mathrm{ord}(G) \leq 1$ for any AF groupoid $G$. 
\end{remark}

\section{Polynomial growth implies amenabilty}\label{sec:pg-implies-amenability}
The main result of this section is Theorem \ref{thm:PG-implies-amenable}, showing that any second-countable locally compact Hausdorff \'etale groupoid $G$ with polynomial growth must be topologically amenable. Key to the proof of this result will be to understand (Borel) amenability of $\Iso(G)$ and the orbit equivalence relation groupoid $\mathcal{R}_G = \{(r(\gamma),s(\gamma))\mid \gamma\in G\}$ when $G$ has polynomial growth, and then deduce topological amenability of $G$ using Lemma \ref{lemma:Borel-amenable-extension}. As such, we begin this section by studying topological amenability of group bundles. 

We say $G$ is \emph{inner exact} if for any closed and invariant subset of $\Go$ the sequence
\begin{align*}
	0 \to \Cr(G(\Go \setminus F)) \to \Cr (G) \to \Cr(G(F)) \to 0
\end{align*}
is exact. The analogous sequence using full groupoid $C^*$-algebras is always exact \cite[pg. 337]{HilsumSkandalis1987}.  
\begin{proposition}\label{prop:SSG-WP-IE}
	Suppose $G$ is a locally compact Hausdorff étale groupoid with polynomial growth. Then $G$ has the weak containment property and is inner exact.
\end{proposition}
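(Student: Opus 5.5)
The strategy is to use the spectrally invariant dense subalgebras of rapidly decaying functions from \cite{AOP2026} (cf.\ \cite{Hou2017,AOP-JNCG}). Fix a uniformly fibrewise coarse length function $\ell$ with $\gamma_\ell \preceq p$. We consider the weighted algebra
\[
H^s_\ell(G)=\Big\{f \;\Big|\; \sup_x \textstyle\sum_{\gamma\in G_x}|f(\gamma)|^2(1+\ell(\gamma))^{2s}<\infty,\ \text{and likewise over } G^x\Big\},
\]
for $s$ large, or a closely related Banach algebra $\mathcal{A}$ with $C_c(G)\subseteq \mathcal{A}\subseteq I(G)$.

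The first step is to record the two properties of $\mathcal{A}$ that we need. Polynomial growth and Cauchy--Schwarz give
\[
\sum_{\gamma\in G_x}|f(\gamma)| \le \Big(\sum_{\gamma \in G_x} (1+\ell(\gamma))^{-2s}\Big)^{1/2}\|f\|_{s},
\]
and the first factor is bounded uniformly in $x$ once $2s>\deg p+1$. We slice $G_x$ into the shells $B_\ell(n+1)x\setminus B_\ell(n)x$ and count at most $p(n+1)$ elements in each. This shows that $\mathcal{A}$ embeds continuously into $I(G)$. We then invoke the result of \cite{AOP2026} that $\mathcal{A}$ is a spectrally invariant, dense $*$-subalgebra of $\Cr(G)$.

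The second step gives weak containment. Let $\pi$ be any $*$-representation of $C_c(G)$; it extends to $I(G)$ and hence to $\mathcal{A}$. For self-adjoint $f\in\mathcal{A}$, spectral invariance gives $\mathrm{sp}_{\mathcal{A}}(f)=\mathrm{sp}_{\Cr(G)}(f)$. Hence
\[
\|\pi(f)\| = r(\pi(f)) \le r_{\mathcal{A}}(f) = r_{\Cr(G)}(f) = \|f\|_{\Cr(G)}.
\]
For general $f$ we apply this to $f^*f$. It follows that $\|f\|_{C^*(G)}=\|f\|_{\Cr(G)}$ on $C_c(G)$, so $C^*(G)\cong\Cr(G)$. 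This is the standard Hulanicki/Ludwig-type argument.

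The third step gives inner exactness. Let $F\subseteq\Go$ be closed and invariant. The reductions $G(F)$ and $G(\Go\setminus F)$ are subgroupoids, so they have polynomial growth with the restricted length function. The second step therefore applies to $G$, $G(F)$ and $G(\Go\setminus F)$. We compare the reduced sequence with the full sequence, which is exact by \cite{HilsumSkandalis1987}. The vertical maps are the canonical quotient maps. They are isomorphisms at the middle and right terms, and at the left term for the ideal $C^*(G(\Go\setminus F))$ (here using that the open-subgroupoid inclusion is compatible with both completions). A diagram chase then yields exactness of the reduced sequence.

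The main obstacle I expect is the spectral invariance input: checking that the algebra from \cite{AOP2026} applies under merely uniformly fibrewise coarse $\ell$, without requiring $\ell$ to be continuous or proper. If needed, we will verify directly the key estimate used there, namely that $\mathcal{A}$ is a Banach algebra satisfying the subconvolutive weight inequality $(1+\ell(\gamma\mu))^s\le 2^s\big((1+\ell(\gamma))^s+(1+\ell(\mu))^s\big)$, together with the uniform summability of $(1+\ell)^{-2s}$ over the fibres.
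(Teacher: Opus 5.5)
Your proposal is correct and follows the paper's route: the paper uses the spectrally invariant subalgebra $S(G,\ell)=S^2_{\ell,(\alpha,\beta)}(G)$ of \cite{AOP2026}, which already covers non-continuous uniformly fibrewise coarse $\ell$ and so disposes of the obstacle you anticipate. It obtains $C^*(S(G,\ell))=\Cr(G)$ by citing Samei--Wiersma and then sandwiches $C^*(S(G,\ell))\to C^*(G)\to\Cr(G)$, whereas you inline the Hulanicki spectral-radius argument behind that citation; applied to $f^*f$ with $f\in C_c(G)$, your version does not even need the involution on the completion. Inner exactness is proved exactly as you describe, by applying weak containment to $G$, $G(F)$ and $G(\Go\setminus F)$ and transferring exactness from the full sequence.
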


Before proving Proposition \ref{prop:SSG-WP-IE}, we need to recall an important construction from the literature. 
Since we are working in the context of not necessarily continuous length functions, we will appeal to constructions from \cite{AOP2026} rather than \cite{Hou2017} or \cite{AOP-JNCG}. 
For any locally compact Hausdorff \'etale groupoid $G$ equipped with a 
uniformly fibrewise coarse
length function $\ell$ such that $\gamma_\ell$ has strong subexponential growth, there are associated Banach $*$-algebras $S^2_{\ell, (\alpha, \beta)}(G)$, which are realized as subalgebras of $\Cr (G)$, \cite[Proposition 4.7]{AOP2026}. 
The quantities $\alpha$ and $\beta$ are related to the degree of strong subexponential growth of 
the  groupoid. 
Any \'etale groupoid of polynomial growth has strong subexponential growth for any values of $\alpha > 0$ and $0 < \beta < 1$ \cite[pg. 3]{AOP2026}, and so the particular values will not matter to us. 
In the sequel we therefore fix a 
uniformly fibrewise coarse 
length function $\ell$ on $G$ such that $\gamma_\ell$ has polynomial growth and will denote the Banach $*$-algebra from \cite[Proposition 4.7]{AOP2026} by $S(G,\ell)$. 
Note that it is only recorded in \cite{AOP2026} that $S(G,\ell)$ is a Banach algebra, as the article concerns groupoid $L^p$-operator algebras.
For $p=2$, that is, in the reduced groupoid $C^*$-algebra case, we may see by inspection that the algebra $S(G,\ell)$ from \cite[Proposition 4.7]{AOP2026} has isometric involution, when the involution is given by \eqref{eq:Cc-convolution-and-involution}. 
We record two important facts about $S(G,\ell)$ for use in the sequel:
\begin{enumerate}
    \item The identity map on $C_c(G)$ extends to continuous dense inclusions
	\begin{align*}
		C_c(G) \hookrightarrow S(G,\ell) \hookrightarrow I(G) \hookrightarrow \Cr (G),
	\end{align*}
    see \cite[Proposition 4.2]{AOP2026}. 
    \item $S(G,\ell)$ is spectrally invariant in $\Cr(G)$, that is
    \begin{align*}
        \mathrm{Sp}_{S(G,\ell)}(f) = \mathrm{Sp}_{\Cr(G)}(f)
    \end{align*}
    for all $f \in S(G,\ell)$, where $\mathrm{Sp}_A(a)$ denotes the spectrum of $a \in A$ in the minimal unitization of the Banach $*$-algebra $A$, see \cite[Proposition 4.8]{AOP2026}. 
\end{enumerate}
\begin{proof}[Proof of Proposition \ref{prop:SSG-WP-IE}]
    We will first show that $G$ has the weak containment property. Note that if a Banach $*$-algebra $A$ admitting a faithful $*$-representation is spectrally invariant inside a $C^*$-completion $B$, then $B = C^*(A)$, that is, $B$ is the $C^*$-envelope of $A$, see for example \cite[Proposition 2.7]{SameiWiersmaQuasiHermitian}. 
    From the above observations we see that $C^*( S(G,\ell)) = \Cr(G)$. Moreover, from the continuous dense inclusions 
	\begin{align*}
		   S(G, \ell) \hookrightarrow I(G) \hookrightarrow \Cr (G),
	\end{align*}
    given by the extensions of the identity map on $C_c(G)$, 
	we may pass to the enveloping $C^*$-algebras to obtain continuous surjective $*$-homomorphisms
	\begin{align*}
		C^*( S (G,\ell) ) \to C^*(I(G)) \to \Cr(G).
	\end{align*}
	This reduces to
	\begin{align*}
		\Cr (G) \to C^*(G) \to \Cr(G)
	\end{align*}
	and we deduce that $\Cr(G) = C^*(G)$, that is,  $G$ has the weak containment property.

    It remains to show that $G$ is inner exact. Let therefore $F$ be any closed invariant subset of $\Go$, and set $U = \Go \setminus F$. We equip the  subgroupoids $G(F)$ and $G(U)$ with the restrictions of the length function $\ell$, and note that both 
    $G(F)$ and $G(U)$ 
    have polynomial growth. As a result, both $G(F)$ and $G(U)$ have the weak containment property by the argument above. As a consequence, the exactness of the sequence
	\begin{align*}
		0 \to \Cr(G(U)) \to \Cr (G) \to \Cr(G(F)) \to 0
	\end{align*}
	then just follows from the exactness of the sequence
	\begin{align*}
		0 \to C^*(G(U)) \to C^* (G) \to C^*(G(F)) \to 0
	\end{align*}
	which we noted above. Since $F \subseteq \Go$ was an arbitrary closed invariant subset, we deduce that $G$ is inner exact.
\end{proof}

We also obtain the following immediate corollary of Proposition \ref{prop:SSG-WP-IE}, which will be important in the proof of Theorem \ref{thm:PG-implies-amenable} below. 

\begin{corollary}\label{cor:bundles-amenable}
	If $G$ is a group bundle étale groupoid which has polynomial growth, then $G$ must be topologically amenable.
\end{corollary}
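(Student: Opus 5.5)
The plan is to combine Proposition \ref{prop:SSG-WP-IE} with a known characterisation of amenability for group bundles. Since $G$ has polynomial growth, Proposition \ref{prop:SSG-WP-IE} shows that $G$ has the weak containment property and is inner exact. For étale group bundles (and more generally groupoids with many isotropy-type features), there is a known result: a group bundle $G$ that is inner exact and has the weak containment property is topologically amenable. I would invoke this, e.g.\ the result of Bönicke et al.\ or Kranz (\emph{weak containment for group bundles}), or \cite[Theorem]{WillettNonamenable2015}-type statements for bundles. Willett's counterexample is itself a group bundle that fails inner exactness, so the extra hypothesis is exactly what is needed.

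If I want a more self-contained argument, I would reduce to fibres. For each $x \in \Go$, the closed set $\{x\}$ is invariant because $G$ is a group bundle. Inner exactness then gives $\Cr(G) \to \Cr(G_x^x)$. Alternatively, since $G_x^x$ is a subgroupoid, it has polynomial growth with the restricted length function. It is therefore a discrete group with polynomial growth with respect to a proper length function ($\ell$ is uniformly fibrewise proper, so balls in $G_x^x$ are finite and grow polynomially). The standard Følner argument shows that balls of radii $n$ have asymptotically vanishing boundary, so each $G_x^x$ is amenable.

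Fibrewise amenability alone does not give topological amenability, as Willett's example shows. The global step is to use that the full and reduced $C^*$-algebras agree together with inner exactness. By the known theorem that for étale group bundles, topological amenability is equivalent to all isotropy groups being amenable plus the reduced algebra being continuous over the unit space, inner exactness with weak containment guarantees that the field $x \mapsto \Cr(G_x^x)$ is continuous. Combined with amenability of the fibres, this yields topological amenability.

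The main obstacle is pinning down the exact literature result that converts "all fibres amenable plus inner exact plus weak containment" into topological amenability for a non-compact, possibly non-continuous-length bundle. I would first try the characterisation that a group bundle is amenable if and only if it has weak containment and each fibre is amenable with continuity of the bundle. Failing that, I would try to build an approximate invariant density by normalising indicator functions of the balls $B_\ell(n)x$ directly. The difficulty with the direct construction is that $\ell$ need not be continuous, so continuity of the density would have to be recovered via Borel amenability and \cite[Corollary 2.15]{RenaultAmenability}.
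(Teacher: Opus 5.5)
Your first paragraph is exactly the paper's proof: Proposition~\ref{prop:SSG-WP-IE} gives weak containment and inner exactness, and the paper then invokes \cite[Proposition 11.6]{DelarocheExactGpds26}, which states that for an \'etale group bundle these two properties together are equivalent to topological amenability; this is the precise reference you were missing. The rest is not needed, and one claim there should be corrected: Willett's bundle has a non-amenable isotropy group at the point at infinity, so it only shows that weak containment alone is insufficient, whereas for a second-countable \'etale group bundle amenability of every isotropy group already gives a Borel approximate invariant density (choose F{\o}lner sets fibrewise in a Borel way using a countable cover by open bisections), hence topological amenability by \cite[Corollary 2.15]{RenaultAmenability}, so your fibrewise route would in fact close on its own, without weak containment, inner exactness, or any continuity-of-fields argument.
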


\begin{proof}
	By \cite[Proposition 11.6]{DelarocheExactGpds26}, topological amenability of a group bundle \'etale groupoid is equivalent to being inner exact and having the weak containment property. The result therefore follows by Proposition \ref{prop:SSG-WP-IE}.  
\end{proof}

The following lemma allows us to focus on 
groupoids generated by precompact sets in the sequel. 

\begin{lemma}\label{lemma:cpct-gen-subgroupoid-amenable}
    Let $G$ be 
    a locally compact Hausdorff 
    \'etale groupoid. Then $G$ is topologically amenable if and only if the subgroupoid $\langle L\rangle \subseteq G$ generated by $L$ is topologically amenable for every open, precompact subset $L\subseteq G$.
\end{lemma}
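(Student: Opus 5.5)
The forward implication follows from the fact that topological amenability passes to open subgroupoids; the converse is a compactness argument using the fact that the defining conditions only need to be checked on compact sets.

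\emph{Forward direction.} First I check that $\langle L\rangle=\bigcup_{n}(L\cup L^{-1}\cup \Go)^n$ is an open subgroupoid of $G$. It is a union of products of open sets in an étale groupoid, and products of open sets are open there. Hence $\langle L\rangle$ is itself an étale groupoid, with unit space $\Go$ once we include the units. Now restrict a topological approximate invariant density $(g_n)$ for $G$ to $\langle L\rangle$; this alone does not work, since mass may escape $\langle L\rangle$. Instead I would use the standard fact that an open subgroupoid $H$ of an amenable étale groupoid is amenable (see \cite{DelarocheRenaultAmenableGpds2000} or \cite{DelarocheAmenability}). Concretely: choose $g_n$ compactly supported and use the cosets $G^x/H$ to push the density onto $H$ by summing over right $H$-cosets through a Borel (locally continuous) selection. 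Alternatively, and more simply, I would invoke \cite[Proposition 5.1.1]{DelarocheRenaultAmenableGpds2000}, which says that amenability passes to closed subgroupoids and to open subgroupoids.

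\emph{Converse.} Suppose every $\langle L\rangle$ is topologically amenable. It suffices to show that for every compact $C\subseteq G$ and every $\varepsilon>0$ there is a nonnegative $g\in C_c(G)$ with:
\begin{itemize}
\item[(a)] $\sum_{\gamma\in G^x}g(\gamma)\le1$ for all $x$;
\item[(b)] $\sum_{\gamma\in G^x}g(\gamma)>1-\varepsilon$ on $r(C)\cup s(C)$;
\item[(c)] $\sum_{\gamma\in G^{r(\mu)}}|g(\mu^{-1}\gamma)-g(\gamma)|<\varepsilon$ for $\mu\in C$.
\end{itemize}
Since $G$ is $\sigma$-compact in the second-countable setting, an exhaustion by compact sets then produces a sequence $(g_n)$ satisfying the definition.

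Given $C$, cover it by finitely many open bisections with compact closure and let $L$ be their union, an open precompact set with $C\subseteq L\subseteq\langle L\rangle=:H$. Amenability of $H$ gives $h\in C_c(H)$ satisfying (a)--(c) inside $H$, where I use the version of topological amenability with compactly supported functions \cite[Proposition 2.2.13]{DelarocheRenaultAmenableGpds2000}. Extend $h$ by zero to $G$. Since $H$ is open and $h$ has compact support in $H$, the extension is continuous on $G$.

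For $\mu\in C\subseteq H$ and $\gamma\in G^{r(\mu)}$, the element $\mu^{-1}\gamma$ lies in $H$ if and only if $\gamma$ does. The sum in (c) over $G^{r(\mu)}$ therefore reduces to the sum over $H^{r(\mu)}$, and the range fibres of $H$ and $G$ over points of $\Go$ agree on the support. Hence (a)--(c) hold for $g$ in $G$.

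\emph{Main obstacle.} The delicate point is the forward direction, namely that amenability passes to open (not necessarily closed) subgroupoids. I would rely on the cited permanence result rather than reprove it. If it is not available in the needed form, I would argue via the measurewise route: for quasi-invariant $\mu$, the measured groupoid $(H,\mu)$ is a subgroupoid of the amenable $(G,\mu)$ and is hence amenable. Then \cite[Theorem 3.3.7]{DelarocheRenaultAmenableGpds2000} together with Borel amenability \cite{RenaultAmenability} returns topological amenability.
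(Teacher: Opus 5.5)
Your proposal is correct and is essentially the paper's proof. The forward direction is the permanence of topological amenability under passing to open subgroupoids \cite[Proposition~5.1.1]{DelarocheRenaultAmenableGpds2000}. The converse encloses a given compact set in an open precompact $L$, takes a compactly supported almost-invariant function on $\langle L\rangle$, extends it by zero (continuous because $\langle L\rangle$ is open), and observes that the fibre sums over $G$ reduce to sums over $\langle L\rangle$.

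You explicitly use $\sigma$-compactness to turn the compact-set/$\varepsilon$ condition into a sequence; the paper leaves this step implicit. It is harmless here, since the lemma is only applied to second-countable groupoids.
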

\begin{proof}
    Since topological amenability passes to open subgroupoids by \cite[Proposition~5.1.1]{DelarocheRenaultAmenableGpds2000}, the forward direction is immediate.

    For the converse let $K\subseteq G$ compact and $\varepsilon >0$ be given. Using local compactness of $G$ we can find an open precompact set $K\subseteq L\subseteq G$.
    By assumption, $H:= \langle L \rangle$ is amenable
    and hence there exists a compactly supported continuous function $f: H \rightarrow [0,1]$
    such that for all $\gamma\in K$ we have $$\left\vert \sum_{\mu\in H_{r(\gamma)}} f(\mu)-1\right\vert <\varepsilon \hbox{ and }\sum_{\mu\in H_{r(\gamma)}}\vert f(\mu)-f(\mu\gamma)\vert<\varepsilon.$$ 
    As $H$ is open in $G$, we can extend $f$ by zero outside of $H$ and view $f$ as a compactly supported function $f:G\to [0,1]$. Our extended function retains the desired properties since for all $\gamma\in K$ we have
    $$\left\vert \sum_{\mu\in G_{r(\gamma)}}f(\mu)-1\right\vert=\left\vert \sum_{\mu\in H_{r(\gamma)}}f(\mu)-1\right\vert<\varepsilon,
    \hbox{ and }$$
    
    $$\sum_{\mu\in G_{r(\gamma)}}\vert f(\mu)-f(\mu \gamma)\vert=\sum_{\mu\in H_{r(\gamma)}}\vert f(\mu)-f(\mu \gamma)\vert<\varepsilon.$$
    
\end{proof}

Note that if $L \subseteq G$ is open and precompact, then $K :=L \cup L^{-1}$ is open, symmetric and precompact, and $\langle L \rangle = \cup_{n \in \N} K^n$. 
If $G$ has polynomial growth, then $\langle L \rangle$ has polynomial growth, and so the growth function associated with $\ell_K:\langle L\rangle \to [0,\infty)$ has polynomial growth by Lemma \ref{lemma:arbitrary-pg-implies-wordlength-pg}. 
In light of Lemma \ref{lemma:cpct-gen-subgroupoid-amenable} we will therefore in the sequel restrict to second-countable locally compact Hausdorff \'etale groupoids admitting an open symmetric precompact generating set.
The length function will be the the associated word-length function from \eqref{eq:def-cpct-gen-length-function}.

Now fix a compactly generated second-countable locally compact Hausdorff \'etale groupoid $G$ with 
open, symmetric precompact
generating set $K$ and associated word-length function $\ell = \ell_K$. 
Associated with this data are two canonical graphs:
\begin{enumerate}
    \item the Cayley graph $\mathrm{Cay}(G,K)$ with vertex set $G$ and edges of the form $(\gamma,\gamma k)$ for $k\in K\setminus \Go$, and
    \item the orbital graph $\mathrm{Orb}(G,K)$ with vertex set $\Go$ and edges $\{r(k),s(k)\}$ for $k\in K$ such that $s(k)\neq r(k)$.
\end{enumerate}
Note that $G$ acts from the left on $\mathrm{Cay}(G,K)$ by multiplication in $G$. 
The source map $\gamma\mapsto s(\gamma)$ induces a morphism of graphs 
\begin{align*}
    \mathrm{Cay}(G,K)\to \mathrm{Orb}(G,K).
\end{align*}
We view both $\mathrm{Cay}(G,K)$ and $\mathrm{Orb}(G,K)$ as metric spaces equipped with the graph metric. 
The ball of radius $n$ centered at $\gamma\in G$ in $\mathrm{Cay}(G,K)$ will be denoted by $B_n^{\mathrm{Cay}}(\gamma)$, and similarly the ball of radius $n$ centered at $x\in \Go$ in $\mathrm{Orb}(G,K)$ will be denoted by $B_n^{\mathrm{Orb}}(x)$. 
We say the graph $\mathrm{Orb}(G,K)$ has polynomial growth if the function $[n \mapsto \sup_{x \in \Go} \left\vert B^{\mathrm{Orb}}_n(x) \right\vert ] $ is dominated by a polynomial as in Definition \ref{def:function-poly-growth}. 
\begin{lemma}\label{Lemma: polynomial growth passes to orbital graph}
For each $x\in \Go$, the source map restricts to a surjection
$$B_n^{\mathrm{Cay}}(x)\to B_n^{\mathrm{Orb}}(x).$$
In particular, we have 
    $\vert B_n^{\mathrm{Orb}}(x)\vert\leq \vert B_n^{\mathrm{Cay}}(x)\vert$.

Moreover, if $G$ admits a  
uniformly fibrewise coarse
length function $\ell$ such that $\gamma_\ell$ has polynomial growth, 
then $\mathrm{Orb}(G,K)$ has polynomial growth 
for any choice of open precompact generating set $K\subseteq G$. 
\end{lemma}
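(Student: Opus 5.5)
We first prove the surjection $B_n^{\mathrm{Cay}}(x)\to B_n^{\mathrm{Orb}}(x)$, then bound $|B_n^{\mathrm{Cay}}(x)|$ by the growth function of a word length. Throughout, $x\in\Go$ is viewed as a vertex of $\mathrm{Cay}(G,K)$.

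\textbf{Well-definedness.} Since $s$ induces a graph morphism, it is $1$-Lipschitz for the graph metrics. Indeed, an edge $(\gamma,\gamma k)$ maps to $\{s(\gamma),s(k)\}=\{r(k),s(k)\}$, which is either an edge of $\mathrm{Orb}(G,K)$ or a single vertex. Hence $s$ maps $B_n^{\mathrm{Cay}}(x)$ into $B_n^{\mathrm{Orb}}(x)$.

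\textbf{Surjectivity.} Let $y\in B_n^{\mathrm{Orb}}(x)$ and choose a path $x=x_0,x_1,\dots,x_m=y$ with $m\le n$. For each $i$ the edge $\{x_{i-1},x_i\}$ comes from some $k\in K$ with $\{r(k),s(k)\}=\{x_{i-1},x_i\}$. Since $K=K^{-1}$, we may pick $k_i\in K$ with $r(k_i)=x_{i-1}$ and $s(k_i)=x_i$. The product $\gamma_i:=k_1\cdots k_i$ is then composable. The vertices $x,\gamma_1,\dots,\gamma_m$ form a path in $\mathrm{Cay}(G,K)$: each step is either an edge, or a trivial step when $k_i\in\Go$, which cannot happen since $x_{i-1}\neq x_i$. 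So $\gamma_m\in B_n^{\mathrm{Cay}}(x)$ and $s(\gamma_m)=y$. This gives the surjection and therefore $|B_n^{\mathrm{Orb}}(x)|\le|B_n^{\mathrm{Cay}}(x)|$.

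\textbf{Polynomial bound.} Every element of $B_n^{\mathrm{Cay}}(x)$ lies in $G^x$ and has the form $k_1\cdots k_m$ with $m\le n$, so it lies in $B_{\ell_K}(n)\cap G^x$. Applying inversion, which preserves $\ell_K$, gives
\[
|B_n^{\mathrm{Cay}}(x)|\le |x B_{\ell_K}(n)| = |B_{\ell_K}(n)x| \le \gamma_{\ell_K}(n).
\]
It therefore suffices to show that $\gamma_{\ell_K}$ has polynomial growth. The subtlety is that $K$ generates $G$ only in the standing reduction, where $G=\langle K\rangle$. In general $K$ generates the open subgroupoid $H=\langle K\rangle$, and the graphs are defined through $K$ alone, so we work in $H$. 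The given length function $\ell$ restricts to $H$ as a uniformly fibrewise coarse length function, and $\gamma_{\ell|_H}\le\gamma_\ell$, so it still has polynomial growth. The argument of Lemma \ref{lemma:arbitrary-pg-implies-wordlength-pg} then applies with the open precompact symmetric set $K$: we have $\ell\le M_K\ell_K$ with $M_K=\sup_K\ell<\infty$, because $\ell$ is controlled and $K$ is precompact. Hence $B_{\ell_K}(n)\subseteq B_\ell(M_Kn)$, and so
\[
\gamma_{\ell_K}(n)\le\gamma_\ell(M_Kn)\le p(M_Kn),
\]
which is again a polynomial in $n$. If $K$ is not symmetric, we replace it by $K\cup K^{-1}$, which yields the same orbital graph.

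The main point needing care is this last step: checking that Lemma \ref{lemma:arbitrary-pg-implies-wordlength-pg}'s comparison uses only that $\ell$ is controlled and that $K$ is open and precompact, and not compact generation of $G$ itself. The path-lifting in the surjectivity step is routine once symmetry of $K$ is used to orient each edge.
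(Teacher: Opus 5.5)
Your proof is correct and follows essentially the same route as the paper: you show that the Cayley ball maps into the orbital ball because each generator contributes at most one orbital edge, you get surjectivity by lifting an orbital path to a product $k_1\cdots k_m$ of generators, and you obtain the growth bound from $|B_n^{\mathrm{Cay}}(x)|\le\gamma_{\ell_K}(n)\le\gamma_\ell(M_K n)$ via the comparison $\ell\le M_K\ell_K$ of Lemma \ref{lemma:arbitrary-pg-implies-wordlength-pg}. The only differences are that you spell out this last step, which the paper calls immediate, and that you add some harmless extra care about symmetrising $K$ and working inside $\langle K\rangle$.
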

\begin{proof}
Let us first show that the restriction of the source map to $B_n^{\mathrm{Cay}}(x)$ has image in $B_n^{\mathrm{Orb}}(x)$. So let $\mu\in B_n^{\mathrm{Cay}}(x)$. Then we can write $\mu=k_1\cdots k_m$ for some $m\leq n$, where $k_1, \ldots ,k_m \in K$. 
    But then 
    $$x=r(\mu)=r(k_1)\leftarrow s(k_1)=r(k_2)\leftarrow \ldots\leftarrow r(k_m)\leftarrow s(k_m)=s(\mu)$$
    is a path of length at most $m$ in $\mathrm{Orb}(G,K)$. It follows that $d(x,s(\mu))\leq m\leq n$, where $d$ is the graph metric on $\mathrm{Orb}(G,K)$.

    Next, let us show surjectivity: let $y\in B_n^{\mathrm{Orb}}(x)$ be given. Then there exists $m\leq n$ and $k_1,\ldots, k_m\in K$ such that 
    $y=s(k_1)$, $r(k_m)=x$ and $r(k_i)=s(k_{i+1})$ for all $1\leq i\leq m-1$.
    But then $\gamma:=k_m\cdots k_1\in K^m$ satisfies $\ell_K(\gamma)\leq m\leq n$ and $r(\gamma)=r(k_m)=x$. It follows that $\gamma\in B_n^{\mathrm{Cay}}(x)$ such that $s(\gamma)=s(k_1)=y$.

    The other statement follows immediately from this. 
\end{proof}

We now state and prove the first main theorem of the article. 

\begin{theorem}\label{thm:PG-implies-amenable}
    Let $G$ be a second-countable locally compact Hausdorff \'etale groupoid, and suppose $G$ has polynomial growth. Then $G$ must be topologically amenable. 
\end{theorem}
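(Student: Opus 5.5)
By Lemma \ref{lemma:cpct-gen-subgroupoid-amenable} and the discussion after it, I may assume that $G$ is generated by an open, symmetric, precompact set $K$, with word-length function $\ell=\ell_K$. Lemma \ref{lemma:arbitrary-pg-implies-wordlength-pg} then says that $\gamma_\ell$ has polynomial growth.

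The strategy is to apply Lemma \ref{lemma:Borel-amenable-extension} to the short exact sequence of Borel groupoids
\begin{align*}
1\to \Iso(G)\xrightarrow{\iota} G\xrightarrow{p}\mathcal{R}_G\to 1,\qquad p(\gamma)=(r(\gamma),s(\gamma)).
\end{align*}
Here $p$ is strongly surjective, since $p(G^x)=\{x\}\times[x]=\mathcal{R}_G^{x}$. Both $\Iso(G)$ and $\mathcal{R}_G$ carry the Borel structures induced from $G$ and from $\Go\times\Go$ respectively. Standard arguments show these are Borel groupoids: $\Iso(G)$ is closed in $G$, and $\mathcal{R}_G$ is the image of the countable-to-one Borel map $p$ on a Polish space, so it is Borel by Lusin--Souslin applied on a countable cover by open bisections. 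I must then show that $\Iso(G)$ and $\mathcal{R}_G$ are each Borel amenable.

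For $\Iso(G)$, I would argue as follows. $\Iso(G)$ is a closed subgroupoid of $G$, hence a locally compact Hausdorff group bundle, though not necessarily étale. I would work with the counting Haar system, or more carefully note that the definitions of Borel approximate invariant densities only use the counting measures on fibres. The restriction of $\ell$ has polynomial growth, so the argument of Proposition \ref{prop:SSG-WP-IE} and Corollary \ref{cor:bundles-amenable} should give amenability. If étaleness genuinely fails, I would instead work fibrewise with a Borel choice. Each $G_x^x$ is a finitely generated-type countable group of polynomial growth with uniformly bounded growth, and balls $B_n(x)\cap G_x^x$ are Følner along a subsequence. Normalized indicator functions $g_n(\gamma)=|B_{n}\cap G^{r(\gamma)}_{r(\gamma)}|^{-1}\mathbf 1_{B_n}(\gamma)$ are Borel, since $\ell_K$ is Borel. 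One caveat: the Følner subsequence must be chosen uniformly across $x$. Polynomial growth with a uniform polynomial bound gives, for each $x$, that $|B_{n+1}|/|B_n|\to1$ along most $n$. Averaging over $n\in[N,2N]$ (Cesàro-type densities $\frac1N\sum_{n=N}^{2N} g_n$) removes the need for a uniform subsequence, and gives pointwise convergence in condition (3).

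For $\mathcal{R}_G$, Lemma \ref{Lemma: polynomial growth passes to orbital graph} shows that $\mathrm{Orb}(G,K)$ is a Borel graph on the standard Borel space $\Go$ whose connected components are exactly the orbits, i.e. the classes of $\mathcal{R}_G$. It has uniformly polynomial growth and is locally countable, in fact locally finite, since $|B_1^{\mathrm{Orb}}(x)|\le|B_1^{\mathrm{Cay}}(x)|\le\gamma_\ell(1)$. By the result of \cite{BernshteynYu2025}, the connectedness relation of a locally finite Borel graph of polynomial (indeed subexponential) growth is hyperfinite. Hence $\mathcal{R}_G$ is a hyperfinite countable Borel equivalence relation, and hyperfinite relations are Borel amenable: an increasing union of finite Borel relations $E_n$ gives $g_n((x,y))=|[x]_{E_n}|^{-1}\mathbf 1_{E_n}(x,y)$.

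The main obstacle is the isotropy step, specifically producing Borel densities that are uniform enough in the fibre. I would first try to reduce it directly to Corollary \ref{cor:bundles-amenable} by checking that $\Iso(G)$ with the subspace topology suffices for the cited results. Failing that, I would use the Cesàro-averaged ball construction above.
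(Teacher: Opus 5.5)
Your proof has the same architecture as the paper's. You reduce to an open precompact symmetric generating set $K$, use the extension $1\to\Iso(G)\to G\to\mathcal{R}_G\to 1$ with $p=(r,s)$ strongly surjective, get hyperfiniteness and hence Borel amenability of $\mathcal{R}_G$ from \cite{BernshteynYu2025} applied to $\mathrm{Orb}(G,K)$, and finish with Lemma \ref{lemma:Borel-amenable-extension}.

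The genuine difference is the isotropy step, and there your caution is justified. The paper simply cites Corollary \ref{cor:bundles-amenable} for $\Iso(G)$. That corollary runs through Proposition \ref{prop:SSG-WP-IE}, i.e.\ $\Cr$, spectral invariance and the characterisation of amenable group bundles, and it is about \'etale group bundles. $\Iso(G)$ is typically not \'etale. Take the transformation groupoid of the $\mathbb{Z}$-action on $[0,1]$ generated by $x\mapsto x^2$, which has linear growth. The isotropy element given by the generator of $\mathbb{Z}$ at the fixed point $0$ is isolated in $\Iso(G)$, but its range $\{0\}$ is not open. So the range map of $\Iso(G)$ is not open, and $\Iso(G)$ does not even carry a continuous Haar system. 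Your first option, checking that the subspace topology suffices, will therefore fail in general. Make the Ces\`aro construction the main argument. It is correct and elementary, and it gives exactly the Borel amenability that Lemma \ref{lemma:Borel-amenable-extension} consumes. The $C^*$-route gives more (weak containment and inner exactness), but only for \'etale bundles, and that extra information is not used in the theorem.

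You should write out the estimate, normalising the averages as $\frac{1}{N+1}\sum_{n=N}^{2N}g_n$ so that condition (1) holds. Set $a_n(x)=|B(n)\cap G^x_x|\le p(n)$ and take $\mu\in G^x_x$ with $\ell(\mu)\le c$. Then $\sum_{\gamma\in G^x_x}|g_n(\mu^{-1}\gamma)-g_n(\gamma)|\le\min\{2,\,2(a_{n+c}(x)/a_n(x)-1)\}$. Telescoping along residue classes mod $c$ shows that at most $c\log p(2N+c)/\log(1+\varepsilon)$ indices $n\in[N,2N]$ have $a_{n+c}(x)/a_n(x)>1+\varepsilon$. Hence the averaged densities have invariance defect at most $2\varepsilon+2c\log p(2N+c)/\bigl((N+1)\log(1+\varepsilon)\bigr)$, uniformly in $x$. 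Borel measurability of $x\mapsto a_n(x)$ follows by counting over a countable cover of $G$ by open bisections.

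Two minor corrections. The groups $G^x_x$ need not be finitely generated; this is harmless, since you only use the restricted length function. Also, \cite{BernshteynYu2025} is a result about polynomial growth, so drop the parenthetical ``indeed subexponential'': that is not what is proved there, and you do not need it.
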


\begin{proof}
    By Lemma \ref{lemma:arbitrary-pg-implies-wordlength-pg} and Lemma \ref{lemma:cpct-gen-subgroupoid-amenable}  it suffices to show the statement under the assumption that $G$ is  
    generated by an open precompact symmetric set $K$ such that the growth function associated with $\ell_K$ has polynomial growth.

    By assumption, $G$ is a standard Borel space. 
    Now note that the vertex set $\Go$ of the orbital graph  $\mathrm{Orb}(G,K)$ is second-countable, locally compact and Hausdorff, that is, it has the structure of a standard Borel space. Moreover, the adjacency relation can be written
    \begin{align*}
        E(\mathrm{Orb}(G,K)) = \bigcup_{x \in \Go} \{ (r(K \cap s^{-1}(\{x\}), x) \}.
    \end{align*}
    Note that since $G$ is second-countable, locally compact and Hausdorff, precompact sets are Borel.
    Thus $r(K \cap s^{-1}(\cdot))$ is a composition of Borel operations. It follows that $E(\mathrm{Orb}(G,K))$ is a Borel subset of $\Go \times \Go$, and so $\mathrm{Orb}(G,K)$ is  a Borel graph in the sense of \cite[Definition 1.5]{BernshteynYu2025}.

The orbital graph $\mathrm{Orb}(G,K)$ has polynomial growth by Lemma \ref{Lemma: polynomial growth passes to orbital graph}. 
Then, by \cite[Corollary 1.16]{BernshteynYu2025}, the equivalence relation associated to $\mathrm{Orb}(G,K)$ which identifies elements in the same connected component of the graph is hyperfinite. 
Note that since $K$ is a generating set for $G$, this equivalence relation coincides with the orbit equivalence relation $\mathcal{R}_G=\{(r(\gamma),s(\gamma))\mid \gamma\in G\}$ of $G$.
Using \cite[pg. 2]{ElekTimar2024} we conclude that $\mathcal{R}_G$ is Borel amenable.
Next, note that the map $(r,s)\colon G\to \mathcal{R}_G$ is 
strongly surjective 
and induces a short exact sequence of of Borel groupoids
$$
  1 \longrightarrow \mathrm{Iso}(G) \longrightarrow G \longrightarrow \mathcal{R}_G \longrightarrow 1.
$$
As argued above, $\mathcal{R}_G$ is Borel amenable, and Corollary \ref{cor:bundles-amenable} implies that $\mathrm{Iso}(G)$ is Borel amenable. 
Since $G$ is \'etale, it then follows from Lemma \ref{lemma:Borel-amenable-extension} that $G$ is topologically amenable.
\end{proof}

\section{Polynomial growth implies weak \texorpdfstring{$m$}{m}-comparison}
In this section we prove the second main result of the article, namely Theorem \ref{thm:pg-implies-dyn-comp}, which tells us that if a compactly generated locally compact Hausdorff \'etale groupoid $G$ with compact and metrizable unit space has polynomial growth, then $G$ has weak $m$-comparison, where $m$ is determined solely by the growth rate of the groupoid. We also prove Corollary \ref{cor:AH-conjecture}, which establishes the AH-conjecture for compactly generated minimal ample groupoids with polynomial growth.

\begin{definition}\label{def:subequivalence}
    Suppose $G$ is  
    a locally compact Hausdorff 
    \'etale groupoid with compact unit space. If $A, B \subseteq \Go$, we say $A$ is \emph{$m$-subequivalent to} $B$, and write $A \precsim_m B$, if there is a finite collection $\mathcal{U} = \{ V_j \}_{j=1}^{\vert \mathcal{U}\vert}$ of  bisections, and a partition of $\mathcal{U}$ into subcollections $\mathcal{U}_0, \ldots , \mathcal{U}_m$ such that
    \begin{enumerate}
        \item $A \subseteq \bigcup_{V \in \mathcal{U}} s(V)$, and
        \item for each $i = 0, \ldots ,m$ we have $r(V) \subseteq B$ for all $V \in \mathcal{U}_i$, and $r(V) \cap r(V') = \emptyset$ for all $V \neq V' \in \mathcal{U}_i$. 
    \end{enumerate}
\end{definition}

Given 
a locally compact Hausdorff
\'etale groupoid $G$ with compact unit space and a Borel measure $\mu$ on $\Go$, we say $\mu$ is \emph{$G$-invariant}, or simply \emph{invariant}, if $\mu(s(V)) = \mu(r(V))$ for all open bisections $V \subseteq G$. Denote 
by $M(G)$ the convex set of invariant positive regular Borel probability measures on $\Go$. It is compact in the weak-$*$-topology inherited from the dual space of $C(\Go)$. 

\begin{definition}\label{def:dynamical-comparison}
    Suppose $G$ is  
    a locally compact Hausdorff 
    \'etale groupoid with compact unit space. 
    We say $G$ has \emph{$m$-comparison} if for any closed $A \subseteq \Go$ and open $B \subseteq \Go$ with $A \subseteq r(GB)$ and for which $\mu(A) < \mu(B)$ for all $\mu \in M(G)$, we have $A \precsim_m B$.  If $m=0$ we simply say $G$ has \emph{comparison}. 
    
    Similarly, we 
    say $G$ has \emph{weak $m$-comparison} if for any closed $A \subseteq \Go$ and open $B \subseteq \Go$ with $A \subseteq r(GB)$ and for which $\sup_{\mu \in M(G)} \mu(A) < \inf_{\mu \in M(G)}\mu(B)$,  we have $A \precsim_m B$.
\end{definition}

We will in Theorem \ref{thm:pg-implies-dyn-comp} show that any 
compactly generated
locally compact Hausdorff 
\'etale groupoid $G$ with polynomial growth satisfies weak $m$-comparison for a sufficiently large $m$. Key to this will be a series of lemmas. For the reader's convenience we record and prove the following basic observation which shows that any function of polynomial growth satisfies bounded doubling at arbitrarily large scales. 

\begin{lemma}\label{lemma:PG-means-doubling-at-scale}
    Let $f \colon \R_{\geq 0} \to \R_{\geq 0}$ be a function with polynomial growth. 
    Denote by $\mathrm{ord}$ the order of growth for $f$ as in \eqref{eq:def-degree-of-PG}.
    Then for every $N\in\R_{\geq 0}$ there exists an $M\geq N$ such that $f(2M) \leq (2^{\mathrm{ord}} +1) f(M)$.
\end{lemma}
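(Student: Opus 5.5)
The plan is to argue by contradiction, using that a function which more than $(2^{\mathrm{ord}}+1)$-tuples along a geometric sequence grows at least like $t^{d'}$ with $d' = \log_2(2^{\mathrm{ord}}+1) > \mathrm{ord}$. Write $d = \mathrm{ord}$. Suppose that for some $N \geq 0$ we have
\begin{align*}
f(2M) > (2^d+1) f(M) \quad \text{for all } M \geq N.
\end{align*}
I first want a starting point where $f$ is strictly positive. Since $f \geq 0$, the assumption at $M = N$ gives $f(2N) > (2^d+1) f(N) \geq 0$. So after replacing $N$ by $N_0 := 2N$ if necessary, I may assume $f(N_0) > 0$, with the inequality still valid for all $M \geq N_0$. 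If $N = 0$, I instead pass to $N_0 := 1$, since the inequality also holds for all $M \geq 1$. I then set $c := f(N_0) > 0$.

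Next I iterate the inequality along $M = 2^k N_0$. By induction on $k$ this gives
\begin{align*}
f(2^k N_0) > (2^d+1)^k c \quad \text{for all } k \geq 1.
\end{align*}
Writing $t_k = 2^k N_0$, we have $(2^d+1)^k = (t_k/N_0)^{d'}$. Hence
\begin{align*}
f(t_k) > c N_0^{-d'} \, t_k^{d'},
\end{align*}
where $d' = \log_2(2^d+1) > \log_2(2^d) = d$.

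Now I bring in the definition of $\mathrm{ord}_f$ as an infimum in \eqref{eq:def-degree-of-PG}. Because $d' > d$, there is a real polynomial $p$ of degree $e < d'$ with $f \preceq p$. For $t \geq 1$ we have $p(t) \leq C t^{e}$, where $C$ is the sum of the absolute values of the coefficients of $p$. Therefore
\begin{align*}
c N_0^{-d'} t_k^{d'} < C t_k^{e} \quad \text{for all large } k.
\end{align*}
Since $t_k \to \infty$ and $d' > e$, this is impossible, which is the desired contradiction.

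The argument also covers the edge cases. When $d = 0$ the constant is $2^0+1 = 2$ and $d' = \log_2 2 = 1 > 0$, so nothing changes. When $f$ vanishes identically on $[N,\infty)$, the assumed strict inequality already fails at $M = N$, so the conclusion holds with $M = N$.

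The only delicate step is securing a point where $f$ is strictly positive, since $f$ may vanish. This is exactly why I use the strictness of the negated inequality: it forces $f(2N) > 0$ (or $f(1) > 0$ when $N = 0$). The rest is a routine comparison of exponents.
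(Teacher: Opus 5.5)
Your proof is correct and takes the same route as the paper: assume $f(2M) > (2^{\mathrm{ord}}+1)f(M)$ for all $M \geq N$, iterate along $2^kN$, and contradict the polynomial upper bound. Your $d' = \log_2(2^{\mathrm{ord}}+1)$ is just a repackaging of the paper's comparison of $(2^{\mathrm{ord}}+1)^k$ with $(2^{\mathrm{ord}})^k$. Your extra step securing $f(N_0)>0$ with $N_0>0$ is genuinely worthwhile. The paper's chain $(2^{\mathrm{ord}}+1)^k f(N) < C(2^kN)^{\mathrm{ord}}$ gives no contradiction when $f(N)=0$ or $N=0$. Choosing a polynomial of degree $e<d'$ also means you never need the infimum defining $\mathrm{ord}$ to be attained.
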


\begin{proof}
    Note that for any $\varepsilon >0$ and all $t >\varepsilon >  0$, the assumption in the lemma implies that there exists $C >0$ such that $f(t) \leq C t^{\mathrm{ord}}$. Suppose there exists an $N$ such that for all $M\geq N$ we have
    $$f(2M)>(2^{\mathrm{ord}} +1) f(M).$$
    Then for all $k\in \mathbb N$
    \begin{align*}
        (2^{\mathrm{ord} }+1)^k f(N) < f(2^kN) \leq C(2^kN)^{\mathrm{ord}} = C (2^{\mathrm{ord}})^kN^\mathrm{ord},  
    \end{align*}
    which is a contradiction. The lemma follows.
\end{proof}

The following lemma now follows immediately by Lemma \ref{lemma:PG-means-doubling-at-scale} and Definition \ref{def:gpd-poly-growth}. 

\begin{lemma}\label{lemma:pg-groupoids-are-doubling-at-scale}
    Let $G$ be  
    a locally compact Hausdorff 
    \'etale groupoid, and suppose $\ell:G\to [0,\infty)$ is a  
    uniformly fibrewise coarse
    length function such that $\gamma_\ell$ has polynomial growth of order $\mathrm{ord}$. Then for every $N\in \mathbb N$ there is $M \geq N$ such that
    \begin{align*}
        \sup_{x\in \Go} \vert B_\ell(2M) x \vert \leq (2^{\mathrm{ord}}+1) \sup_{x \in \Go} \vert B_\ell (M) x \vert.  
    \end{align*}
\end{lemma}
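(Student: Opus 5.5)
The statement is a direct specialisation of Lemma \ref{lemma:PG-means-doubling-at-scale} to the growth function $f=\gamma_\ell$ from \eqref{eq:growth-function}. The plan is therefore to check that $\gamma_\ell$ satisfies the hypotheses of that lemma, apply it, and unwind the definitions.

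First, I verify that $\gamma_\ell$ is a well-defined function $\R_{\geq 0}\to\R_{\geq 0}$. Since $\ell$ is uniformly fibrewise proper, $\sup_{x\in\Go}\vert B_\ell(t)x\vert<\infty$ for every $t\geq 0$, so $\gamma_\ell(t)$ is finite. By hypothesis $\gamma_\ell$ has polynomial growth of order $\mathrm{ord}=\mathrm{ord}_{\gamma_\ell}$ in the sense of Definition \ref{def:function-poly-growth}.

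Next, given $N\in\N$, I regard it as an element of $\R_{\geq 0}$ and apply Lemma \ref{lemma:PG-means-doubling-at-scale} to $f=\gamma_\ell$. This produces $M\geq N$ with $\gamma_\ell(2M)\leq (2^{\mathrm{ord}}+1)\gamma_\ell(M)$. Substituting the definition $\gamma_\ell(t)=\sup_{x\in\Go}\vert B_\ell(t)x\vert$ on both sides gives exactly the displayed inequality.

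There is no real obstacle; the argument is a matter of bookkeeping. The one point to handle with care is that the order in the two lemmas must be the same quantity, namely $\mathrm{ord}_{\gamma_\ell}$ for the fixed $\ell$, rather than $\mathrm{ord}(G)$, which is an infimum over all length functions. The statement refers to "polynomial growth of order $\mathrm{ord}$" for $\gamma_\ell$ itself, so this is consistent. The argument also relies on the proof of Lemma \ref{lemma:PG-means-doubling-at-scale} applying at the exponent $\mathrm{ord}$, which is an infimum that may not be attained. If needed, I would note that the bound $f(t)\leq Ct^{\mathrm{ord}}$ used there is part of that earlier lemma, which we take as given.
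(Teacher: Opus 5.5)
Your proposal is correct and is exactly the paper's argument, which likewise obtains the lemma immediately by applying Lemma \ref{lemma:PG-means-doubling-at-scale} to $f=\gamma_\ell$ and unwinding \eqref{eq:growth-function}. Your caveat about attainment is unnecessary: the infimum in \eqref{eq:def-degree-of-PG} ranges over polynomial degrees, which are non-negative integers, so it is always attained and the bound $f(t)\leq Ct^{\mathrm{ord}}$ used in that lemma holds.
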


We will find the expressions $\sup_{\mu \in M(G)} \mu(A)$  and  $\inf_{\mu \in M(G)}\mu(B)$ from Definition \ref{def:dynamical-comparison} rather unwieldy for our purposes. 
As such, we record the following lemma which  provides us with expressions which will be easier to manipulate in the sequel. 

\begin{lemma}\label{lemma: banach density}
    Let $G$ be a $\sigma$-compact 
    locally compact Hausdorff 
    \'etale groupoid with polynomial growth. Let $\ell$ be a 
    uniformly fibrewise coarse
    length function witnessing polynomial growth, and let $B(n)\coloneqq \{\gamma\in G\mid \ell(\gamma)\leq n\}$. Then the following hold:
    \begin{enumerate}
        \item  for every $K\subseteq G$ open and precompact and every $\varepsilon >0$ there exists an $n\in \mathbb N$ such that
    $$\frac{\vert KB(n)x\vert}{\vert B(n)x\vert}<1+\varepsilon, \hbox{ for all }x\in \Go;$$
    \item For all closed sets $A\subseteq \Go$ one has 
    $$\limsup_{n\to \infty} \sup_{x\in \Go} \frac{1}{\vert B(n)x\vert}\sum_{\gamma\in B(n)x}1_A(r(\gamma))\leq \sup_{\mu\in M(G)} \mu(A).$$
    \item For all open sets $A\subseteq \Go$ one has $$\inf_{\mu\in M(G)} \mu(A)\leq \liminf_{n\to \infty} \inf_{x\in \Go} \frac{1}{\vert B(n)x\vert}\sum_{\gamma\in B(n)x}1_A(r(\gamma)).$$
    \end{enumerate}
\end{lemma}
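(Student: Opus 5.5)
The plan is to prove (1) by a F{\o}lner-type counting argument, and then to obtain (2) and (3) by building invariant measures as weak-$*$ limits of the averaging measures along the sets $B(n)x$, using (1) to show the limits are $G$-invariant.

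\emph{Step (1).} Since $\ell$ is controlled and $K$ is precompact, $c:=\sup_{\gamma\in K}\ell(\gamma)<\infty$. Subadditivity then gives $KB(n)x\subseteq B(n+c)x$, so it suffices to find $n$ with
\begin{equation*}
|B(n+c)x|\leq (1+\varepsilon)|B(n)x| \quad \text{for all } x\in\Go .
\end{equation*}
For a fixed $x$, telescoping along $n=jc$, $j=0,\dots,k$, shows that the product of the consecutive ratios is at most $|B(kc+c)x|\leq \gamma_\ell(kc+c)$, which is polynomial in $k$. Since $|B(0)x|\geq 1$, at most $O(\log k/\log(1+\varepsilon))$ of the indices $j\leq k$ can be ``bad'', i.e.\ have ratio $>1+\varepsilon$.

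The main obstacle is uniformity in $x$: the set of bad indices depends on $x$, and a union over all $x$ could a priori exhaust every index. I would first try to exploit uniform fibrewise properness, namely that $|B(c)x|$ is bounded by a constant $D$ independent of $x$, together with the doubling-at-scale of Lemma \ref{lemma:pg-groupoids-are-doubling-at-scale}. The aim is a pigeonhole argument on a window $[M,2M]$ which yields a single $n$ that is good simultaneously for all $x$. If this fails, the fallback is to replace $B(n)x$ by Ces\`aro-averaged weights $\frac1N\sum_{n\leq N}|B(n)x|^{-1}1_{B(n)x}$. The telescoping bound then gives a boundary error of order $c\log\gamma_\ell(N)/N\to0$, uniformly in $x$, and this averaged version suffices for (2) and (3).

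\emph{Step (2).} Argue by contradiction. Suppose the $\limsup$ exceeds some $s>\sup_{\mu\in M(G)}\mu(A)$. Choose $n_k\to\infty$ and $x_k\in\Go$ whose averages exceed $s$, and define the probability measures
\begin{equation*}
\mu_k=\frac{1}{|B(n_k)x_k|}\sum_{\gamma\in B(n_k)x_k}\delta_{r(\gamma)} .
\end{equation*}
In the intended setting $\Go$ is compact, so a subsequence converges weak-$*$ to a probability measure $\mu$. (In the merely $\sigma$-compact case one would have to check tightness, but only the compact case is used later.) Invariance of $\mu$: for an open precompact bisection $V$ and $f\in C_c(r(V))$, set $f^V:=f\circ r\circ (s|_V)^{-1}$ on $s(V)$. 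The map $\gamma\mapsto v\gamma$ (with $v\in V$, $s(v)=r(\gamma)$) is injective from $\{\gamma\in B(n)x: r(\gamma)\in s(V)\}$ into $VB(n)x$. Hence $|\int f\,d\mu_k-\int f^V d\mu_k|\leq \|f\|_\infty\,|VB(n_k)x_k\setminus B(n_k)x_k|/|B(n_k)x_k|$, together with the symmetric bound using $V^{-1}$, and this tends to $0$ by (1) (applied to $K=V\cup V^{-1}$). Thus $\mu\in M(G)$. Since $A$ is closed, the portmanteau theorem gives $\mu(A)\geq\limsup_k\mu_k(A)\geq s$, a contradiction.

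\emph{Step (3)} then follows by applying (2) to the closed set $\Go\setminus A$, using $1_A=1-1_{\Go\setminus A}$ and $\mu(A)=1-\mu(\Go\setminus A)$.
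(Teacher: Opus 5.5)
Your proposal has a genuine gap in Step (1), and it is the one you flag yourself: uniformity in $x$ is the whole content of (1), and neither of your routes supplies it. Lemma~\ref{lemma:pg-groupoids-are-doubling-at-scale} only compares $\sup_x|B(2M)x|$ with $\sup_x|B(M)x|$, so it says nothing about a fibre whose balls lie far below the supremum. The bound $|B(c)x|\le D$ only caps each individual ratio. Per-fibre telescoping shows that each $x$ has few bad scales, but the bad scales are free to move with $x$.

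In fact the growth bound alone does not seem to prevent every scale from being bad for some fibre. Let $H_n$ have layers $L_0=\{x_n\},L_1,\dots,L_{n+1}$ with $|L_j|=\max\{1,\lfloor 4n/(n+2-j)^2\rfloor\}$. Close each layer up into a cycle, and join each vertex of $L_{j+1}$ to one vertex of $L_j$ by a balanced monotone map. These graphs have bounded degree and satisfy $|B(t)y|\le Ct^3$ uniformly in $n$, $y$ and $t\ge 1$. The reason is that a path of length $t$ starting in $L_j$ sweeps an arc of angle at most $t/|L_{\max\{0,j-t\}}|$. Yet $|B(n)x_n|<3.6n+1$ while $|L_{n+1}|=4n$, so $|B(n+1)x_n|>1.8\,|B(n)x_n|$. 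Realise all the $H_n$ (made rigid by edge labels) as orbits of the groupoid of germs of root-moving maps on their closure in the compact space of rooted edge-labelled graphs. This should give a compactly generated \'etale groupoid with polynomial growth in which no $n$ satisfies (1) for $K=B(1)$ and $\varepsilon=1/2$. Your Ces\`aro fallback proves a true but different statement: an averaged F{\o}lner property with error $O(\log\gamma_\ell(N)/N)$, uniformly in $x$. It does not prove (1).

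The same problem affects Step (2). There you need $|VB(n_k)x_k\triangle B(n_k)x_k|/|B(n_k)x_k|\to0$ along the particular scales $n_k$ that realise the $\limsup$. Even (1) as stated, which provides one good $n$ for each $K$ and $\varepsilon$, does not give this; you would need (1) for all sufficiently large $n$. Nor does the averaged version suffice for (2) and (3) as you claim. With Ces\`aro weights, your weak-$*$ argument bounds $\limsup_N\sup_x\frac1N\sum_{n\le N}D_n(x)$, where $D_n(x)=|B(n)x|^{-1}\sum_{\gamma\in B(n)x}1_A(r(\gamma))$. That gives no control of $D_n(x)$ at individual scales, which is what (2) asserts.

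For comparison, the paper does not prove (1) but imports it from \cite[Proposition 3.2]{AOP2026}. Its proof of (2) is the same weak-$*$ limit argument as yours. Your test functions $f$ and $f^V$ give a cleaner passage to the limit than its $\varepsilon/3$ remark, and your tightness caveat applies to it too. It invokes (1) along $(n_k)$ exactly as you do, and (3) is identical. So outside Step (1) your outline coincides with the paper's, but the step you could not close is not supplied by the paper's argument either.
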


\begin{proof}
    Note that (1) follows by \cite[Proposition 3.2]{AOP2026}.
    
    For (2) fix a closed subset $A\subseteq \Go$ and
    write $\overline{D}_+(A)$ for the left-hand side. By the properties of the limit superior there exists a strictly increasing sequence $(n_k)_k$ of natural numbers such that
    $$\lim_{k\to \infty} \sup_{x\in \Go} \frac{1}{\vert B(n_k)x\vert}\sum_{\gamma\in B(n_k)x}1_A(r(\gamma))=   \overline{D}_+(A).$$
    For each $k$ choose $x_k$ such that 
    $$\left\vert \sup_{x\in \Go}\frac{1}{\vert B(n_k)x\vert}\sum_{\gamma\in B(n_k)x}1_A(r(\gamma)) -  \frac{1}{\vert B(n_k)x_k\vert}\sum_{\gamma\in B(n_k)x_k}1_A(r(\gamma))\right\vert <\frac{1}{k}.$$
    Then a simple application of the triangle inequality shows $$\lim_{k\to \infty}\frac{1}{\vert B(n_k)x_k\vert}\sum_{\gamma\in B(n_k)x_k}1_A(r(\gamma))=\overline{D}_+(A).$$
    Define $$\nu_k:= \frac{1}{\vert B(n_k)x_k\vert}\sum_{\gamma\in B(n_k)x_k}\delta_{r(\gamma)}$$
    Then $\nu_k$ is a probability measure and up to passing to another subsequence we may assume that $\nu_k\to \nu$ in the weak-$*$-topology.
    
    We claim that the sequence $\nu_k$ is approximately invariant. Let $U\subseteq G$ be an open precompact bisection. Then, for any $\gamma\in G$ we have $r(\gamma)\in s(U)$ if and only if $r(U\gamma)\in r(U)$. Using this fact we obtain
    \begin{align*}
        \nu_k(s(U)) & =\frac{1}{\vert B(n_k)x_k\vert}\sum_{\gamma\in B(n_k)x_k}\delta_{r(\gamma)}(s(U))\\
        & = \frac{1}{\vert B(n_k)x_k\vert}\sum_{\gamma\in B(n_k)x_k}\delta_{r(U\gamma)}(r(U))\\
        & = \frac{1}{\vert B(n_k)x_k\vert}\sum_{\gamma\in UB(n_k)x_k}\delta_{r(\gamma)}(r(U)).
    \end{align*}
    Applying this equality allows us to estimate
    \begin{align*}
    \vert \nu_k(r(U))-\nu_k(s(U))\vert & =  \frac{1}{\vert B(n_k)x_k\vert}\left\vert\sum_{\gamma\in B(n_k)x_k}\delta_{r(\gamma)}(r(U))-\sum_{\gamma\in UB(n_k)x_k}\delta_{r(\gamma)}(r(U))\right\vert\\
    & \leq  \frac{1}{\vert B(n_k)x_k\vert}\sum_{\gamma\in UB(n_k)x_k\Delta B(n_k)x_k}\delta_{r(\gamma)}(r(U))\\
    & \leq \frac{\vert UB(n_k)x_k\Delta B(n_k)x_k\vert }{\vert B(n_k)x_k\vert}.
    \end{align*}
    Finally, applying part (1) of this lemma, we conclude that $\vert \nu_k(r(U))-\nu_k(s(U))\vert \to 0$ as $k\to \infty$.
    A standard $\varepsilon/3$-argument then shows that the limit measure $\nu$ must be invariant with respect to open precompact bisections. If $U\subseteq G$ is an arbitrary open bisection, use $\sigma$-compactness of $G$ to write $U=\bigcup_{n\in \mathbb N} U_n$ for an increasing sequence of open precompact bisections. It follows that $(s(U_n))_{n\in \mathbb N}$ and $(r(U_n))_{n\in \mathbb N}$ are increasing sequences of open sets in $\Go$ and hence
    $$\nu(s(U))=\lim_{n\to \infty} \nu(s(U_n))=\lim_{n\to \infty} \nu(r(U_n))=\nu(r(U)).$$
     Thus, $\nu\in M(G)$ and since $A$ is closed we have
     \begin{align*}
          \nu(A)\geq \limsup_{k\to \infty} \nu_k(A) & = \lim_{k\to \infty} \frac{1}{\vert B(n_k)x_k\vert}\sum_{\gamma\in B(n_k)x_k}\delta_{r(\gamma)}(A)\\
          & = \lim_{k\to \infty} \frac{1}{\vert B(n_k)x_k\vert}\sum_{\gamma\in B(n_k)x_k}1_A(r(\gamma)) = \overline{D}_+(A).
     \end{align*}
    In particular, we have
    $$\overline{D}_+(A)\leq\nu(A)\leq \sup_{\mu\in M(G)} \mu(A).$$

    For the final item (3), let $A\subseteq \Go$ be an open set and observe that $\underline{D}^-(A)\coloneqq \liminf_{n\to \infty} \inf_{x\in \Go} \frac{1}{\vert B(n)x\vert}\sum_{\gamma\in B(n)x}1_A(r(\gamma))$ satisfies
    $$\underline{D}^-(A)=1-\overline{D}_+(\Go\setminus A).$$
    Thus, we can apply (2) to the closed set $\Go\setminus A$, to obtain the result.
\end{proof}

The next lemma is crucial to the proof of Theorem \ref{thm:pg-implies-dyn-comp}. It broadly follows the philosophy of \cite[Lemma 3.1]{NaryshkinComparison2022} and \cite[Lemma 6.4]{DownarowiczZhang23}, but  additional care is needed to account for the added complexity present in the groupoid setting. 
Before proceeding we need some notation. 
Let $G$ be 
a locally compact Hausdorff 
\'etale groupoid, and suppose $\Go$ is metrized by a metric $d$. For $A \subseteq \Go$ and $\varepsilon >0$ we set
    \begin{align*}
        A^\varepsilon &:= \{ x \in \Go \mid d(x,A) < \varepsilon  \} \\
        A^{-\varepsilon} &:= \{ x \in \Go \mid d(x, \Go \setminus A) > \varepsilon  \},
    \end{align*}
    where for any $Y \subseteq \Go$ we have $d(x,Y) = \inf \{ d(x,y) \mid y\in Y\}$.  

\begin{lemma}\label{Lemma: m-comparison}
    Let $G$ be 
    a locally compact Hausdorff 
    \'etale groupoid for which the topology of $\Go$ is metrizable, and suppose $d$ is a metric generating this topology.  
    Assume moreover that $A\subseteq \Go$ is closed, $B\subseteq \Go$ is open, and $A \subseteq r(GB)$. 
    Let $m \in \N$, and suppose
    that there exist a 
    precompact open
    subset $D\subseteq G$ and $\varepsilon>0$ such that
    $$\vert \{\gamma\in D^{-1}Dx\mid r(\gamma)\in A^\varepsilon\}\vert < m\vert\{\gamma\in Dx\mid r(\gamma)\in B^{-\varepsilon}\}\vert \hbox{ for all }x\in \Go.$$
    Then $A\precsim_{m-1} B$.
\end{lemma}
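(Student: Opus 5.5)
The plan is to build the required bisections by a finite greedy allocation with load at most $m$ on every point of $B$, and to use the hypothesised counting inequality to show the allocation never gets stuck. This is in the spirit of \cite[Lemma 3.1]{NaryshkinComparison2022}. The $m$ load levels then give the partition $\mathcal{U}_0,\ldots,\mathcal{U}_{m-1}$ required for $A\precsim_{m-1}B$. The margin $\varepsilon$ in $A^\varepsilon$ and $B^{-\varepsilon}$ is there to make all choices stable under small perturbations, so that they can be made on open sets, i.e.\ on bisections, rather than pointwise.

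\textbf{Step 1 (local pieces).} For $y\in A$, the admissible targets are the arrows $\eta\in D$ with $s(\eta)$ close to $y$ and $r(\eta)\in B^{-\varepsilon}$; these are witnessed by the right-hand side of the hypothesis at $x=y$. By precompactness of $D$, uniform fibrewise finiteness of $D^{-1}Dx$, and étaleness, I cover $D$ by finitely many open bisections $W_1,\ldots,W_p\subseteq G$ of small diameter. Using compactness of $A$, I then choose a finite open cover $O_1,\ldots,O_q$ of $A$ by sets of diameter $<\varepsilon/2$, each contained in $A^{\varepsilon}$. Each candidate piece is $V_{ij}=W_j s^{-1}(O_i)$, restricted to those $(i,j)$ with $r(V_{ij})\subseteq B^{-\varepsilon/2}\subseteq B$. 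The smallness of the bisections and $d$-continuity of $r$ along them guarantee that if $r(\eta)\in B^{-\varepsilon}$ for one point of a piece, then the whole range of the piece lies in $B$.

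\textbf{Step 2 (greedy assignment).} I enumerate $O_1,\ldots,O_q$ and assign to each $O_i$ one admissible $j$ and a level $c\in\{0,\ldots,m-1\}$. The constraint is that $r(V_{ij})$ is disjoint from the ranges of all previously chosen pieces of level $c$. Shrinking the sets $O_i$ slightly first (to closed-then-open neighbourhoods), I need $A\subseteq\bigcup s(V)$, and disjointness must be checked only on the finitely many overlaps.

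\textbf{Step 3 (main obstacle: no point gets stuck).} Suppose at stage $i$ every admissible target for some $y\in O_i$ is blocked at all $m$ levels. Each target point $r(\eta)$, with $\eta\in Dy$ and $r(\eta)\in B^{-\varepsilon}$, is then already hit by $m$ earlier pieces. Those pieces come from sources $z\in A^\varepsilon$ joined to $r(\eta)$ by an element of $D$. Hence the source arrows correspond to distinct elements of $D^{-1}Dy$ with range in $A^\varepsilon$, and together with $y$ itself this gives
\begin{align*}
\vert\{\gamma\in D^{-1}Dy\mid r(\gamma)\in A^\varepsilon\}\vert\geq m\,\vert\{\eta\in Dy\mid r(\eta)\in B^{-\varepsilon}\}\vert ,
\end{align*}
contradicting the hypothesis at $x=y$.

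The delicate point is that blocking is only by \emph{overlap} of open ranges, not equality of points. I therefore expect to replace points by nearby points and use the $\varepsilon$-margins to land inside $A^\varepsilon$ and $B^{-\varepsilon}$. I also need injectivity of the assignment $\text{blocker}\mapsto\gamma$, which follows because pieces of one level have disjoint ranges and because the $W_j$ are bisections. I would first try to make this rigorous by choosing all diameters below $\varepsilon/2$ and arguing fibrewise at $x=y$.
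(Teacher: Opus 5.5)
\paragraph{Verdict.} There is a genuine gap in Step~3, at the claim that the blockers give \emph{distinct} elements of $D^{-1}Dy$.

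\paragraph{Where Step~3 fails.} Your pieces come from an open cover $O_1,\ldots,O_q$ of $A$, which in general must overlap. Step~2 assigns each $O_i$ wholesale and never removes what earlier pieces already cover. So a point $a\in O_{i'}\cap O_{i''}$ can be sent through the same $W_{j'}$ at level $0$ (as part of $O_{i'}$) and again at level $1$ (as part of $O_{i''}$), because your rule only forbids reusing level $0$ there. Both slots of the target $t=\theta_{W_{j'}}(a)$ are then consumed by one arrow $\kappa$. For a later $y$ with $\eta\in Dy$ and $r(\eta)=t$, both blockers produce the same element $\kappa^{-1}\eta$.

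Disjointness of ranges within a level, together with the bisection property of the $W_j$, only gives injectivity inside one level. This is not just a presentational issue. Take the discrete caricature where $y$ has two targets in $B^{-\varepsilon}$, each filled at both levels by a single doubly used point. Then $y$ is stuck, yet the left-hand count at $y$ is only $3<4=m\cdot 2$.

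\paragraph{A second problem.} Failure of Step~2 at stage $i$ only says that each $r(V_{ij})$ meets some earlier level-$c$ range \emph{somewhere}. Different pairs $(j,c)$ can be blocked over different parts of $O_i$. So you do not obtain a single $y$ all of whose targets are occupied at all $m$ levels. Passing to nearby points, as you suggest, makes the cross-level collisions above more likely, not less.

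\paragraph{The missing idea.} The paper replaces the stuck-point argument by an invariant that is maintained step by step.
\begin{itemize}
\item It writes $D=V_1\cup\cdots\cup V_{M_D}$ with open bisections and runs through the $mM_D$ pairs $(V,k)$.
\item At step $n$ it sends the maximal, slightly thickened part $U_n=\theta_V^{-1}(B_{n,k}\cap(\theta_V(A_n))^{\varepsilon_{n+1}})$ of the \emph{remaining} set through $V$ into the still-free part of level $k$.
\item It then removes what was sent: $A_{n+1}=A_n\setminus U_n$ and $B_{n+1,k}=B_{n,k}\setminus\overline{\theta_V(U_n)}$, with $\varepsilon_{n+1}<\varepsilon_n$.
\item The summed inequality $|\{\gamma\in D^{-1}Dx\mid r(\gamma)\in A_n^{\varepsilon_n}\}|<\sum_{k=1}^{m}|\{\gamma\in Dx\mid r(\gamma)\in B_{n,k}^{-\varepsilon_n}\}|$ is preserved. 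The proof is a fibrewise injection, through the single bisection $V$, from the capacity lost at step $n$ into the shell $A_n^{\varepsilon_n}\setminus A_{n+1}^{\varepsilon_{n+1}}$.
\item These shells are pairwise disjoint across steps, so cross-level injectivity comes for free.
\item The contradiction is drawn only at the end. A leftover $x\in A_{mM_D+1}$ has, by construction, every $D$-target removed at every level, so the right-hand side vanishes while the left-hand side is at least $1$.
\end{itemize}
This scheme needs no spatial cover of $A$ at all; finiteness comes from the bisection decomposition of $D$. To keep a greedy flavour, you need exactly this ``send what fits, remove it, carry the rest'' step with nested margins. Assigning overlapping pieces of $A$ wholesale does not work as stated.
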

\begin{proof}
    First, cover $G$ using open 
    bisections $(V_i)_{i \in I}$ for some index set $I$.  
    Since $\overline{D}$ is compact, there is a finite subcover of $\overline{D}$. 
    For any open bisection $V$, we have that $D \cap V$ is again an open bisection as $D$ is open. 
    As such, we may after possibly reindexing arrange that $D = \cup_{n=1}^{M_D}V_n$, for some $M_D \geq 1$. 

    Fix a bijection $\sigma$ from $\{V_1, \ldots , V_{M_D} \} \times \{1, \ldots , m \}$ to $\{1, \ldots , m \cdot M_D \}$. We will now specify an algorithm which in finitely many steps will implement the subequivalence $A \precsim_{m-1}B$. Specifically, using a total of $m \cdot M_D$ steps, we will exhibit families $\mathcal{U}_1, \ldots ,\mathcal{U}_m$ of open sets 
    which satisfy the necessary conditions from Definition \ref{def:subequivalence}. 

    We begin by setting
    $A_1 = A$, $B_{1,k} = B$ for all $k = 1,\ldots m$, $\varepsilon_1 = \varepsilon$, and $\mathcal{U}_k = \emptyset$ for all $k=1,\ldots, m$. 
    Then assume that at step $n$ we have initial data consisting of a closed set $A_n$, an $m$-tuple of open sets $(B_{n,1}, \ldots , B_{n,k})$, and $\varepsilon_n > 0$ such that
    \begin{align}\label{eq:step-n-subequiv}
        \vert \{ \gamma \in D^{-1}Dx \mid r(\gamma) \in A_n^{\varepsilon_n} \}\vert < \sum_{k=1}^m \vert \{ \gamma \in Dx \mid r(\gamma) \in B_{n,k}^{-\varepsilon_n} \} \vert 
    \end{align}
    for every $x \in \Go$. 
    The initial data at step $1$ satisfies \eqref{eq:step-n-subequiv} by assumption.

    Before proceeding, note that to each bisection $V$ we have a homeomorphism $\theta_V \colon s(V) \to r(V)$. Moreover, suppose $V \subseteq D$. Then for any $r(\mu) \in s(V)$, there is a unique $\gamma_V \in D$ such that $\theta_V(r(\mu)) = r(\gamma_V \mu)$.

    Now, for $n \in \{1, \ldots , m\cdot M_D -1\}$ let $\sigma^{-1}(n) = (V, k_0)$. Then choose $\varepsilon_{n+1}$ with
    \begin{align*}
    \varepsilon_{n+1} &< \varepsilon_n \\
    \theta_V^{-1}((\theta_V(A_n))^{3\varepsilon_{n+1}} )&\subseteq A_n^{\varepsilon_n} \\
     \theta_V (( \theta_V^{-1}(X \setminus B_{n,k_0} ))^{2 \varepsilon_{n+1}}) &\subseteq (X \setminus B_{n, k_0} )^{\varepsilon_n}
    \end{align*}
    Set $U_n := \theta_V^{-1} (B_{n,k_0} \cap (\theta_V(A_n))^{\varepsilon_{n+1}})$, and add $U_n$ to $\mathcal{U}_{k_0}$. Then define
    \begin{align*}
        B_{n+1, k} = B_{n,k} \quad \text{if $k \neq k_0$} \\
        \theta_{U_n} = \theta_V, \quad B_{n+1, k_0} = B_{n, k_0} \setminus \overline{\theta_V(U_n)}, \quad A_{n+1} = A_n \setminus U_n.
    \end{align*}
    We claim that the new data satisfies \eqref{eq:step-n-subequiv}. 
    
    To verify this we will first show that for all $x\in \Go$, the assignment 
    \begin{align*}
        \mu\mapsto (r|_V^{-1}(r(\mu)))^{-1}\mu, \quad \text{for $\mu \in Dx$,}
    \end{align*}
    gives rise to an injection of finite sets
    \begin{align}\tag{$\dagger$}
    \{\gamma\in Dx\mid r(\gamma)\in B_{n,k_0}^{-\varepsilon_{n}}\setminus B_{n+1,k_0}^{-\varepsilon_{n+1}}\}\to \{\gamma\in D^{-1}Dx\mid r(\gamma)\in A_n^{\varepsilon_n}\setminus A_{n+1}^{\varepsilon_{n+1}}\}.
    \end{align}
    The map is clearly injective, so we only need to check that it takes values in the correct set.
    To see this let $x \in \Go$ be arbitrary, and suppose $\mu \in Dx$ is such that $r(\mu x) = r(\mu) \in B_{n,k_0}^{-\varepsilon_{n}}$ but $r(\mu) \not \in B_{n+1,k_0}^{-\varepsilon_{n+1}}$. The latter implies 
    \begin{align*}
        d(r(\mu), (X \setminus B_{n,k_0}) \cup \overline{\theta_V(U_n)}) \leq \varepsilon_{n+1}.
    \end{align*}
    Since $r(\mu) \in B_{n, k_0}^{-\varepsilon_n}$ it follows that
    \begin{align*}
        d(r(\mu), X \setminus B_{n, k_0}) > \varepsilon_n > \varepsilon_{n+1},
    \end{align*}
    from which we deduce $d(r(\mu), \overline{\theta_V(U_n)}) \leq \varepsilon_{n+1}$. 
    Now, $\theta_V(U_n)$ is a subset of $(\theta_V (A_n))^{\varepsilon_{n+1}}$, so $r(h) \in (\theta_V(A_n))^{3 \varepsilon_{n+1}}$. Moreover, $\theta_V^{-1}(r(\mu)) \in A_n^{\varepsilon_n}$ since $\theta_V^{-1}((\theta_V(A_n))^{3 \varepsilon_{n+1}}) \subseteq A_n^{\varepsilon_n}$. Then, since $\theta_V^{-1} \colon r(V) \to s(V)$ is a homeomorphism, there is a unique $\gamma_V \in V^{-1} \subseteq D^{-1}$ such that $s(\gamma_V^{-1}) = r(\mu)$ and $r(\gamma_V^{-1} \mu) \in s(V)$. By the above observations we therefore deduce $r(\gamma_V^{-1}\mu) \in A_n^{\varepsilon_n}$. 
    However, we see that $A_{n+1} \subseteq \theta_V^{-1}(X \setminus B_{n,k_0})$, because otherwise we would have
    \begin{align*}
        r(\mu) = \theta_V(r(\gamma_V^{-1}\mu)) &\in \theta_V (A_{n+1}^{\varepsilon_{n+1}}) \subseteq \theta_V ((\theta_V^{-1}(X \setminus B_{n,k_0}))^{\varepsilon_{n+1}}) &\ \\
        &\subseteq \theta_V((\theta_V^{-1}(X \setminus B_{n,k_0}))^{2\varepsilon_{n+1}} \subseteq (X \setminus B_{n,k_0})^{\varepsilon_n},
    \end{align*}
    which is a contradiction. Then since $r(\mu) \not \in (X \setminus B_{n,k_0})^{\varepsilon_n}$ we get $\theta_V^{-1}(r(\mu)) = r(\gamma_V^{-1}\mu) \not \in A_{n+1}^{\varepsilon_{n+1}}$, so it follows that $\theta_V^{-1}(r(\mu)) \not \in A_{n+1}^{\varepsilon_{n+1}}$. 
    Thus for every $\mu \in D x$ such that $r(\mu) \in B_{n,k_0}^{-\varepsilon_n} \setminus B_{n+1,k_0}^{-\varepsilon_{n+1}}$ we have $\gamma_V^{-1}\mu \in D^{-1}Dx$ and $r(\gamma_V^{-1}\mu x) \in A_n^{\varepsilon_n} \setminus A_{n+1}^{\varepsilon_{n+1}}$ as required. We conclude that $(\dagger)$ holds.

    Having established the injection $(\dagger)$ it is easy to see that we obtain the following inequality:
    \begin{equation*}
        \vert\{\gamma\in Dx\mid r(\gamma)\in B_{n,k_0}^{\varepsilon_n}\}\vert-\vert\{\gamma\in D^{-1}Dx\mid r(\gamma)\in A_n^{\varepsilon_n}\setminus A_{n+1}^{\varepsilon_{n+1}}\}\vert \leq \vert \{\gamma\in Dx\mid r(\gamma)\in B_{n+1,k_0}^{\varepsilon_{n+1}}\}\vert.
    \end{equation*}
    Using this we estimate
    \begin{align*}
        \vert \{\gamma\in D^{-1}Dx&\mid r(\gamma)\in A_{n+1}^{\varepsilon_{n+1}}\}\vert \\  &=  \vert \{\gamma\in D^{-1}Dx\mid r(\gamma)\in A_{n}^{\varepsilon_{n}}\}\vert -  \vert \{\gamma\in D^{-1}Dx\mid r(\gamma)\in A_n^{\varepsilon_n}\setminus A_{n+1}^{\varepsilon_{n+1}}\}\vert \\
        & < \sum_{k=1}^m \vert\{\gamma\in Dx\mid r(\gamma)\in B_{n,k}^{-\varepsilon_n}\}\vert -  \vert \{\gamma\in D^{-1}Dx\mid r(\gamma)\in A_n^{\varepsilon_n}\setminus A_{n+1}^{\varepsilon_{n+1}}\}\vert \\
        & \leq \sum_{k\neq k_0} \vert\{\gamma\in Dx\mid r(\gamma)\in B_{n+1,k}^{-\varepsilon_{n+1}}\}\vert\\ &\quad\quad + \vert\{\gamma\in Dx\mid r(\gamma)\in B_{n,k_0}^{-\varepsilon_n}\}\vert -  \vert \{\gamma\in D^{-1}Dx\mid r(\gamma)\in A_n^{\varepsilon_n}\setminus A_{n+1}^{\varepsilon_{n+1}}\}\vert \\
        & \leq \sum_{k=1}^m \vert \{\gamma\in Dx\mid r(\gamma)\in B_{n+1,k}^{-\varepsilon_{n+1}}\}\vert.
    \end{align*}
    Therefore \eqref{eq:step-n-subequiv} also holds true at step $n+1$. In particular it holds true at step $m \cdot M_D$. 

    It remains to show that the families $\mathcal{U}_k$, $k=1, \ldots , m$ implement the subequivalence $A \preceq_{m-1} B$. 
    By the construction of the sets $U_n$ we see that that $\theta_{V_{U_n}}(U_n)$ are disjoint open subsets of $B$ when $U_n$ are taken from the same family $\mathcal{U}_k$. 
    Thus we need only verify that $U_1, \ldots U_{m\cdot M_D}$ cover $A$. 
    If this is not the case, then $A_{mM_D +1} \neq \emptyset$, so we may pick some $x$ in this set. 
    Then, for every pair $(V,k) \in \{V_1, \ldots , V_{M_D} \} \times \{1 ,\ldots , m\}$ we have $x \not \in U_{\sigma(V,k)}$. 
    This in particular implies $\theta_V(x) \not \in B_{\sigma(V,k), k}$, because otherwise $\theta_V (x) \in \theta_V(A_n) \subseteq (\theta_V(A_n))^{\varepsilon_{n+1}}$ for all $n$, and we would have 
    \begin{align*}
        x = \theta_V^{-1}(\theta_V(x)) \in \theta_V^{-1}(B_{\sigma(V,k),k} \cap (\theta_V(A_{\sigma(V,k)}))^{\varepsilon_{n+1}}) = U_{\sigma(V,k)},
    \end{align*}
     which is a contradiction.
     It therefore follows that $\theta_V(x) \not \in B_{m\cdot M_D+1, k}$.
    Then $\vert \{ \gamma \in D^{-1}Dx \mid r(\gamma) \in A_{m\cdot M_D+1}^{\varepsilon_{m\cdot M_D +1}} \}\vert \geq 1$, while for every $k = 1, \ldots , m$ we have $\{\gamma\in D \mid r(\gamma) \in B_{m\cdot M_D+1, k}^{-\varepsilon_{m\cdot M_D +1}} \} = \emptyset$, which contradicts \eqref{eq:step-n-subequiv}. We deduce that $A = \cup_{n=1}^{m\cdot M_D} U_n$. This finishes the proof. 
\end{proof}

We may now prove the second main result of the article. 

\begin{theorem}\label{thm:pg-implies-dyn-comp}
    Suppose $G$ is a compactly generated 
    locally compact Hausdorff 
    \'etale groupoid with compact and metrizable unit space.
    If $G$ has polynomial growth,  
    then
    there exists an $m \in \N \cup \{0\}$ such that $G$ has weak $m$-comparison. 
\end{theorem}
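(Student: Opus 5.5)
Fix an open, symmetric, precompact generating set $K$ from Lemma \ref{lemma:arbitrary-pg-implies-wordlength-pg}, with word length $\ell=\ell_K$, balls $B(n)=B_\ell(n)$, and $\mathrm{ord}=\mathrm{ord}_{\gamma_\ell}$. We aim for $m := \lceil 2^{\mathrm{ord}}+1\rceil^{2}$ (any fixed integer depending only on the doubling constant $C:=2^{\mathrm{ord}}+1$ should do) and prove weak $m$-comparison by verifying the hypothesis of Lemma \ref{Lemma: m-comparison} with $m+1$ in place of $m$ and $D = B(M)$ for a suitable $M$. Note $B(M)\subseteq K^M\cup\Go$, so $D$ is precompact. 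It is open: $K^M$ is open because products of open sets in an étale groupoid are open, and $\Go$ is open since $G$ is étale. Also $D^{-1}D\subseteq B(2M)$.

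First I will fix closed $A$ and open $B$ with $A\subseteq r(GB)$ and $\sup_\mu\mu(A)<\inf_\mu\mu(B)$. Choose $\delta>0$ and $\varepsilon>0$ with $\sup_\mu\mu(A^{\varepsilon})\le \sup_\mu\mu(\overline{A^\varepsilon})<a$ and $\inf_\mu \mu(B^{-\varepsilon})>b$, where $a+\delta<b$. This uses regularity and compactness of $M(G)$: $\sup_\mu\mu(\overline{A^{\varepsilon}})\downarrow\sup_\mu\mu(A)$ by a weak-$*$ compactness argument on the decreasing closed sets, and symmetrically for $B^{-\varepsilon}$ increasing to $B$. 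By Lemma \ref{lemma: banach density}(2),(3), applied to the closed set $\overline{A^\varepsilon}$ and the open set $B^{-\varepsilon}$, there is $N$ such that for all $n\ge N$ and all $x$,
$$|\{\gamma\in B(n)x\mid r(\gamma)\in A^\varepsilon\}|\le a|B(n)x|,\qquad |\{\gamma\in B(n)x\mid r(\gamma)\in B^{-\varepsilon}\}|\ge b|B(n)x|.$$

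Here I expect the main obstacle. Lemma \ref{Lemma: m-comparison} compares $D^{-1}Dx\subseteq B(2M)x$ with $Dx=B(M)x$ at the \emph{same} point $x$. Lemma \ref{lemma:pg-groupoids-are-doubling-at-scale} only gives $\sup_x|B(2M)x|\le C\sup_x|B(M)x|$, and the supremum need not be attained at $x$, so the fibre $B(M)x$ could be much smaller than the supremum. Without homogeneity the doubling is not pointwise. I would try to get pointwise doubling on a sufficiently large scale by a pigeonhole argument along a geometric sequence of scales. Consider $M_j=2^jN_0$. Since $|B(M_j)x|\le p(M_j)$ and $|B(M_j)x|\ge 1$, for any fixed $x$ the ratios $|B(2M_j)x|/|B(M_j)x|$ cannot all exceed $2^{\mathrm{ord}+1}$ for $J$ consecutive $j$'s, once $J$ is larger than a number depending only on $p$ and $N_0$. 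This still makes the good scale depend on $x$, so I would then apply the per-scale version of Lemma \ref{Lemma: m-comparison}. In fact its proof is fibrewise, and one can take $D=B(M)$ but bound $|B(2M)x|\le \sum$ over translates.

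A cleaner option is to cover $B(2M)x$ by boundedly many translates $\gamma B(M)$ using a maximal $M$-separated set inside $B(2M)x$. The disjoint balls $\gamma B(M/2)\subseteq B(3M)x$ have cardinality $|B(M/2)s(\gamma)|$. Polynomial growth together with doubling at scale for the supremum should bound their number. I would pursue this covering approach and absorb the resulting constant into $m$.

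Given such an $M\ge N$ with $|D^{-1}Dx|\le L|Dx|$ for all $x$, where $L$ depends only on $\mathrm{ord}$, the remaining estimate is easy:
$$|\{\gamma\in D^{-1}Dx\mid r(\gamma)\in A^\varepsilon\}|\le a|B(2M)x|\le aL|B(M)x|<(L+1)\, b|B(M)x|\le (L+1)|\{\gamma\in Dx\mid r(\gamma)\in B^{-\varepsilon}\}|.$$
Lemma \ref{Lemma: m-comparison} then gives $A\precsim_{L} B$, so $G$ has weak $L$-comparison with $L$ independent of $A$ and $B$. If the pointwise doubling step fails, the fallback is to redo the density estimates with $\sup_x|B(n)x|$ normalisation, which Lemma \ref{lemma:pg-groupoids-are-doubling-at-scale} controls directly.
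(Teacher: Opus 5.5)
\textbf{There is a genuine gap, at exactly the step you flagged as the main obstacle.} You never prove the estimate $|D^{-1}Dx|\le L|Dx|$ for all $x\in\Go$ at a single scale $M$, with $L$ depending only on $\mathrm{ord}$. In fact it is false in general, so no remedy of this kind can deliver it.

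Here is a counterexample. Let $X=\{\omega\in\{0,1\}^{\mathbb{Z}^2}\mid \omega(0)=1\}$. Let $g\in\mathbb{Z}^2$ act by $\omega\mapsto\omega(\cdot+g)$ on the open set of those $\omega$ for which $0$ and $g$ lie in the same nearest-neighbour cluster of $\omega^{-1}(1)$. This is a partial action as in the paper's final example. Its groupoid has the following properties:
\begin{itemize}
\item it is compactly generated by the compact open bisections with $|g|_1\le 1$;
\item the word length is the graph distance inside the cluster;
\item $|B(n)\omega|\le 2n^2+2n+1$.
\end{itemize}
So all hypotheses of the theorem hold. Now let $\omega_M$ have occupied set equal to the segment $\{(i,0)\mid 0\le i\le M\}$ together with the square $[M,2M]\times[0,M]$. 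Then $|B(M)\omega_M|=M+1$, while $|B(2M)\omega_M|\ge (M+1)(M+2)/2$. Hence $\sup_x |B(2M)x|/|B(M)x|\ge M/2$ for \emph{every} $M$.

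Neither of your suggested remedies gets around this:
\begin{itemize}
\item The pigeonhole argument only gives a good scale depending on $x$. Lemma \ref{Lemma: m-comparison} has no fibrewise version with varying $D$: its proof maintains \eqref{eq:step-n-subequiv} through the injection $(\dagger)$, which is built from the bisections of one fixed $D$.
\item The separated-net covering needs two homogeneity facts that fail here. It needs a bound on the number of $M$-separated points of $B(2M)x$; a comb-shaped cluster puts on the order of $M$ of them into $B(2M)x$. It also needs comparability of $|B(M)r(\gamma)|$ with $|B(M)x|$; the square example above destroys that.
\end{itemize}

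\textbf{Your fallback is essentially what the paper does.} The paper never attempts pointwise doubling. It keeps the per-fibre normalisation in both densities and bounds $|B(2M)x|\le(2^{\mathrm{ord}}+1)\sup_y|B(M)y|$ via Lemma \ref{lemma:pg-groupoids-are-doubling-at-scale}. It then divides by $|B(M)x|\ge 1$ and applies Lemma \ref{Lemma: m-comparison} with $D=B(M)$ and $m=(2^{\mathrm{ord}}+1)\sup_y|B(M)y|$.

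Be aware that this $m$ grows with $M$. Moreover $M\ge N$, and $N$ depends on $A$ and $B$ through the gap $\inf_\mu\mu(B)-\sup_\mu\mu(A)$. So that argument, as written, yields $A\precsim_{m-1}B$ with $m=m(A,B)$. It does not by itself produce the single $m$ required by Definition \ref{def:dynamical-comparison}. Your diagnosis that uniformity of $m$ is the real difficulty is correct, but your proposal does not resolve it, and the paper's route does not either.

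\textbf{What is sound in your proposal.} Your choice of $\varepsilon$ is correct. It uses semicontinuity of $\mu\mapsto\mu(\overline{A^\varepsilon})$ and $\mu\mapsto\mu(B^{-\varepsilon})$ together with compactness of $M(G)$. This step is genuinely needed to invoke Lemma \ref{Lemma: m-comparison}, whose hypothesis is stated for $A^\varepsilon$ and $B^{-\varepsilon}$, and the paper's proof passes over it.
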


\begin{proof}
By Lemma \ref{lemma:arbitrary-pg-implies-wordlength-pg}, we can find a  precompact open symmetric generating set $K$ for $G$ such that the 
uniformly fibrewise coarse
length function $\ell_K$ witnesses the polynomial growth of $G$. 
Note that the length function $\ell_K$ is integer-valued. 
To ease notation, we write $B(n) := B_{\ell_K}(n)$ in the rest of the proof.
We remark that since $\ell_K$ is the word-length function for the generating set $K$, we have $B(M)^{-1} B(M) = B(2M)$ for any $M \geq 0$, which we will use below.
Furthermore, since $K$ is open and precompact, the balls  
$B(n)$ 
are open and precompact for all $n \in \N \cup \{0\}$.

    Let $A\subseteq \Go$ be a closed subset and let $C\subseteq \Go$ be an open subset such that $\sup_{\mu\in M(G)} \mu(A)<\inf_{\mu\in M(G)} \mu(C)$.
    Combining this inequality with  Lemma \ref{lemma: banach density} implies
    \begin{align*}
        \limsup_{n\to \infty} \sup_{x\in \Go} \frac{1}{\vert B(n)x\vert}\sum_{\gamma\in B(n)x}1_A(r(\gamma)) < \liminf_{n\to \infty} \inf_{x\in \Go} \frac{1}{\vert B(n)x\vert}\sum_{\gamma\in B(n)x}1_C(r(\gamma)).
    \end{align*}
    Using the definitions of the limit superior and limit inferior, we conclude that there exists an $N\in \mathbb N$ such that for all $n_1,n_2\geq N$ we have
    $$\sup_{x\in \Go} \frac{1}{\vert B(n_1)x\vert}\sum_{\gamma\in B(n_1)x}1_A(r(\gamma)) < \inf_{x\in \Go} \frac{1}{\vert B(n_2)x\vert}\sum_{\gamma\in B(n_2)x}1_C(r(\gamma)).$$
    Thus, for all $n_1,n_2\geq N$ and all $x_1,x_2\in \Go$ we have
     $$\frac{1}{\vert B(n_1)x_1\vert}\sum_{\gamma\in B(n_1)x_1}1_A(r(\gamma)) < \frac{1}{\vert B(n_2)x_2\vert}\sum_{\gamma\in B(n_2)x_2}1_C(r(\gamma)),$$
     which we can rephrase as
     $$\frac{\vert \{\gamma\in B(n_1)x_1\mid r(\gamma)\in A\}\vert}{\vert B(n_1)x_1\vert}<\frac{\vert \{\gamma\in B(n_2)x_2\mid r(\gamma)\in C\}\vert}{\vert B(n_2)x_2\vert}.$$
    By Lemma \ref{lemma:pg-groupoids-are-doubling-at-scale} there exists an $M\geq N$ such that $$\sup_{x\in \Go}\vert B(2M)x\vert\leq (2^\mathrm{ord}+1) \sup_{x\in \Go}\vert B(M)x\vert.$$
    In particular, for all $x\in \Go$ we get
    \begin{align*}
         \vert\{\gamma\in B(2M)x\mid r(\gamma)\in A\}\vert \vert B(M)x\vert &<  \vert B(2M)x\vert\vert\{\gamma\in B(M)x\mid r(\gamma)\in C\}\vert \\
         & \leq (2^\mathrm{ord}+1) \sup_{x\in \Go}\vert B(M)x\vert \vert\{\gamma\in B(M)x\mid r(\gamma)\in C\}\vert
    \end{align*}
    Since $B(M)x\neq \emptyset$ for all $x\in \Go$ we conclude that
     $$\vert\{\gamma\in B(2M)x\mid r(\gamma)\in A\}\vert < (2^\mathrm{ord}+1) \sup_{x\in \Go}\vert B(M)x\vert \vert\{\gamma\in B(M)x\mid r(\gamma)\in C\}\vert$$
     The result now follows from Lemma \ref{Lemma: m-comparison} using  
     $m=(2^\mathrm{ord}+1) \sup_{x\in \Go}\vert B(M)x\vert$
     and $D = B(M)$. 
     \end{proof}

     \begin{remark}
         Note that the value of $m$ found in the proof above is larger than the analogous value in \cite[Theorem 3.2]{NaryshkinComparison2022} by the factor  $\sup_{x\in \Go}\vert B(M)x\vert$. This discrepancy appears  due to a lack of homogeneity in of the Cayley graphs for groupoids.
     \end{remark}

In order to apply Theorem \ref{thm:pg-implies-dyn-comp} in the context of homological invariants for topological full groups, we will need the following lemma. It adapts the proof of \cite[Proposition~3.10]{AraBonickeBosaLi23} in the same way \cite[Lemma~2.3]{NaryshkinComparison2022} adapts the proof of \cite[Lemma~6.4]{DownarowiczZhang23}.

\begin{lemma}\label{lemma: weak m-comp implies comparison}
    Let $G$ be a $\sigma$-compact locally compact Hausdorff minimal ample groupoid with compact and metrizable unit space. If $G$ has weak $m$-comparison for some $m\in \mathbb N \cup \{0\}$, then $G$ has comparison.
\end{lemma}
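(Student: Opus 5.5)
The plan is to adapt the argument of \cite[Proposition~3.10]{AraBonickeBosaLi23}. Weak $m$-comparison will only be applied to tiny pieces of $A$. Their $m+1$ overlapping layers of images will be separated into $m+1$ pairwise disjoint clopen target sets, turning $\precsim_m$ into $\precsim_0$. The pieces are then packed greedily into $B$.

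\textbf{Step 1 (reduction to clopen sets with a uniform gap).} Let $A$ be closed, $B$ open, with $\mu(A)<\mu(B)$ for all $\mu\in M(G)$. If $\Go$ is finite, minimality makes it a single orbit, $M(G)$ consists of the normalised counting measure, and the statement is elementary; so assume $\Go$ is infinite, hence without isolated points and with every $\mu\in M(G)$ atomless. Since $G$ is ample, $\Go$ has a basis of compact open sets. Regularity of each $\mu$, compactness of $M(G)$, and the fact that $\mu\mapsto\mu(E)$ is weak-$*$ continuous for clopen $E$ then give clopen sets $A'\supseteq A$ and $B'\subseteq B$ with $\mu(A')<\mu(B')$ for all $\mu$. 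By compactness again,
\[
\delta:=\inf_{\mu\in M(G)}\bigl(\mu(B')-\mu(A')\bigr)>0 .
\]
It then suffices to show $A'\precsim_0 B'$. Note that the hypothesis $A\subseteq r(GB)$ is automatic for $B\neq\emptyset$ by minimality.

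\textbf{Step 2 (uniformly small pieces and clopen targets).} Fix $\eta>0$, to be chosen small compared with $\delta/(m+1)$. I will partition $A'$ into clopen pieces $A_1,\dots,A_k$ with $\sup_\mu\mu(A_j)<\eta$. This is possible because the invariant measures are atomless: for each $x$ and $\mu$ there is a clopen neighbourhood of $x$ of $\mu$-measure $<\eta$. Weak-$*$ continuity and compactness of $M(G)$ make this uniform in $\mu$, and compactness of $A'$ finishes the partition.

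The key auxiliary claim is the following. For every clopen $R$ with $\inf_\mu\mu(R)>(m+1)c\eta$, one can find pairwise disjoint clopen $T_0,\dots,T_m\subseteq R$ with
\[
\eta<\inf_\mu\mu(T_i)\quad\text{and}\quad \sup_\mu\mu(T_i)<c\eta ,
\]
for a constant $c$ independent of $\eta$. To prove it, I would use minimality: finitely many translates of any nonempty clopen set cover $\Go$, so every nonempty clopen set has uniformly positive measure. I would then cut $R$ into small clopen bits and group them by a compactness argument. This claim, with its upper bound uniform over all of $M(G)$, is the step I expect to be the main obstacle. If it is too delicate, the fallback is to choose the $T_i$ directly as unions of small bits.

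\textbf{Step 3 (from $\precsim_m$ to $\precsim_0$).} Given such $T_i$, set $T=T_0\sqcup\dots\sqcup T_m$. Weak $m$-comparison applied to $A_j$ and each single $T_i$ gives $A_j\precsim_m T_i$. Instead I will use a disjoint target per layer: since $\sup_\mu\mu(A_j)<\inf_\mu\mu(T_i)$ for every $i$, the images of layer $\mathcal U_i$ can be arranged to lie in $T_i$. Because $G$ is ample, the bisections can be shrunk to compact open ones whose sources are pairwise disjoint and still cover $A_j$. The resulting images are then pairwise disjoint inside $T$, which gives $A_j\precsim_0 T$ via a compact open bisection $W_j$ with $s(W_j)=A_j$. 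Arranging layer $i$ to land in $T_i$ may need a small modification of the weak comparison hypothesis, applied to $A_j$ against $T_i$ and then reassembled.

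\textbf{Step 4 (greedy packing).} Put $R_1=B'$ and embed $A_1$ into $T^{(1)}\subseteq R_1$ as in Step 3. Set $R_2=R_1\setminus r(W_1)$ and continue. Each step removes at most $(m+1)c\eta$ more measure than $\mu(A_j)$. I will organise this by charging per unit of $\mu(A')$: group the $A_j$ into batches of measure about $\eta$ and reuse leftover room in each $T^{(j)}$ for subsequent pieces. In this way the total consumed measure stays below $\mu(A')+\delta/2$. Then $\inf_\mu\mu(R_j)$ remains larger than $(m+1)c\eta$ throughout, so Step 2 keeps applying. The union of the $W_j$ then implements $A'\precsim_0 B'$, and hence $A\precsim_0 B$.
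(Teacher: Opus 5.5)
There is a genuine gap, located in Step 3; Step 2 is a symptom of it. Weak $m$-comparison applied to $A_j$ and $T_i$ gives, for each $i$ separately, a witness $\mathcal U^{(i)}=\mathcal U^{(i)}_0\sqcup\dots\sqcup\mathcal U^{(i)}_m$ whose ranges all lie in $T_i$. Only the union of \emph{all} layers of a single witness is guaranteed to cover $A_j$, so taking layer $i$ from witness $i$ need not cover $A_j$. Nor can a single witness be ``arranged'' to send different layers into different, unrelated targets. The hypothesis only produces witnesses into one open set, and moving layer $i$ from $T_0$ into $T_i$ would itself require comparison.

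The missing idea is that the $m+1$ targets must be translates of one set. Choose compact open bisections $S_0,\dots,S_m$ with common source $U=s(S_i)$ and pairwise disjoint ranges $r(S_i)\subseteq B$. Minimality gives $m+1$ arrows out of a single point with distinct ranges in $B$, and you shrink around them. A single witness $A'\precsim_m U$ with layers $\mathcal U_0,\dots,\mathcal U_m$ then yields the family $\{S_iV\mid V\in\mathcal U_i,\ 0\le i\le m\}$. It has the same sources, since $r(V)\subseteq U=s(S_i)$, and pairwise disjoint ranges in $\bigsqcup_i r(S_i)$. Hence $A'\precsim_0\bigsqcup_i r(S_i)$.

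Your Step 2 claim, with bounds $\eta<\inf_\mu\mu(T_i)\le\sup_\mu\mu(T_i)<c\eta$ and $c$ independent of $\eta$ and of the leftover region $R$, has two problems. It is insufficient, because you would need the $T_i$ to be translates. It is also unproven: ``cut into bits and group by compactness'' does not control all $\mu\in M(G)$ at once, since different invariant measures can weight the bits of $R$ in very different proportions.

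The real obstruction is the order of quantifiers: you fix $\eta$ first and then need targets of measure $>\eta$ in every leftover region. The paper reverses this order.
\begin{enumerate}
\item It fixes $U$ and the $S_i$ first, with $\sup_\mu\mu(U)<\varepsilon/(m+1)$, so that $B_1=B\setminus\bigsqcup_i r(S_i)$ still satisfies $\mu(A)<\mu(B_1)$ uniformly. The only lower bound needed is $\inf_\mu\mu(U)>0$, which minimality supplies.
\item It moves $A$ greedily into $B_1$ along a countable cover of $G$ by compact open bisections.
\item The residue $A_\infty$ satisfies $GA_\infty\cap B_\infty=\emptyset$ and $\mu(B_\infty)\ge\delta$, so it is null for every ergodic, hence every, invariant measure.
\item Dini's theorem then gives a finite stage $k$ with $\sup_\mu\mu(A_k)<\inf_\mu\mu(U)$.
\item Weak $m$-comparison is applied only once, to $A_k$ against $U$. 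The translate trick gives $A_k\precsim_0 B\setminus B_1$, while $A\setminus A_k\precsim_0 B_1$ by construction.
\end{enumerate}

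Your Step 1 reduction is fine. In Step 4 no batching is needed, since each $W_j$ consumes exactly $\mu(A_j)$. But without the translate structure and some substitute for the exhaustion--Dini argument, Steps 2--4 do not go through.
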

\begin{proof} Note that any minimal groupoid with a finite  unit space has comparison. We can therefore assume that $\Go$ is infinite.
    Suppose $A$ and $B$ are two clopen subsets of $\Go$ such that 
    $$\mu(A)<\mu(B) \hbox{ for all }\mu\in M(G).$$
    It follows from \cite[Remark~3.8]{AraBonickeBosaLi23} that there exists an $\varepsilon > 0 $ such that $\mu(B)-\mu(A) > \varepsilon$ for all $\mu\in M(G)$.
    Note also, that $B$ must be infinite. 
    Fix a point $x\in \Go$. We claim that we can inductively find elements $\gamma_0,\ldots, \gamma_m\in G$ such that $s(\gamma_i)=x$, $r(\gamma_i)\in B$, and $r(\gamma_i)\neq r(\gamma_j)$ unless $i=j$. Indeed, since $G$ is minimal and $B$ is open we have $ B\cap Gx\neq \emptyset$. Thus, there exists an element $\gamma_0\in G$ such that $s(\gamma_0)=x$ and $r(\gamma_0)\in B$.
    Supposing now that we have already found $\gamma_0,\ldots, \gamma_k$, use the fact that $B$ is infinite and Hausdorff to find a proper clopen subset $O\subset B$ such that $r(\gamma_0),\ldots, r(\gamma_k)\in B\setminus O$. 
    Since $O$ is open and $G$ is minimal, $O\cap Gx\neq \emptyset$ and hence there exists an element $\gamma_{k+1}\in G$ such that $s(\gamma_{k+1})=x$ and $r(\gamma_{k+1})\in O$. Then the elements $\gamma_0,\ldots, \gamma_k,\gamma_{k+1}\in G$ satisfy the desired requirements.
    
    For each $0\leq i\leq m$ find a compact open bisection $S_i\subseteq G$ such that $\gamma_i\in S_i$. By shrinking the $S_i$ if necessary we may assume without loss of generality that $s(S_i)=s(S_j)$ for all $i,j\in \{0,\ldots, m\}$, that for $U:=s(S_0)$ we have $\sup_{\mu\in M(G)}\mu(U)<\frac{\varepsilon}{m+1}$, and that the sets $r(S_i)$ are pairwise disjoint subsets of $B$.
    Since $G$ is minimal, we also have
    $$\inf_{\mu\in M(G)}\mu(U)>0.$$
    Define 
    $$B_1 \coloneqq B\setminus (r(S_0)\sqcup \ldots \sqcup r(S_m)).$$
    Note that $B_1$ is a clopen subset of $\Go$ such that for all $\mu\in M(G)$ we have
    $$\mu(B_1)=\mu(B)-\sum_{i=0}^m\mu(r(S_i))=\mu(B)-\sum_{i=0}^m\mu(s(S_i))=\mu(B)-(m+1)\mu(U)\geq \mu(B)-\varepsilon >\mu(A).$$
 
    Set $A_1\coloneqq A$. It follows from \cite[Remark~3.8]{AraBonickeBosaLi23} again that there exists a $\delta > 0 $ such that 
    \begin{equation}\label{eq:comparison+delta}
        \mu(B_1)-\mu(A_1) > \delta \hbox{ for all }\mu\in M(G).
    \end{equation} Using $\sigma$-compactness, we can find a countable cover $(V_n)_{n\in \mathbb N}$ of $G$ by compact open bisections. Let $\theta_n \colon s(V_n)\to r(V_n)$ denote the homeomorphism associated with the bisection $V_n$. Inductively, define sequences $(U_n)_n$, $(A_n)_n$ and $(B_n)_n$ as follows:
    \begin{align*}
        U_n & \coloneqq \theta_n^{-1}(B_n\cap \theta_n(A_n \cap s(V_n))), \\
        B_{n+1} & \coloneqq B_n\setminus \theta_n(U_n), \hbox{ and }\\
        A_{n+1} & \coloneqq A_n\setminus U_n.
    \end{align*}
Note that each of the sets above is compact open. Moreover, note that it may happen that $U_n$ is empty for some $n$, in which case $B_{n+1}=B_n$ and $A_{n+1}=A_n$, so the algorithm can still continue from there.
We claim that for all $n\in \mathbb N$ we have that $\mu(A_n)+\delta<\mu(B_n)$ for all $\mu\in M(G)$. We already know from (\ref{eq:comparison+delta}) that this is the case for $n=1$. Proceeding by induction, we have
$$\mu(B_{n+1})=\mu(B_n)-\mu(\theta_n(U_n))=\mu(B_n)-\mu(U_n)=\mu(B_n)-\mu(A_n)+\mu(A_{n+1})>\delta + \mu(A_{n+1}).$$
Let $A_\infty = \bigcap_{n\in \mathbb N} A_n$ and $B_\infty = \bigcap_{n\in \mathbb N} B_n$. As the sequences $(A_n)_n$ and $(B_n)_n$ are decreasing, we conclude that
$$\mu(B_\infty)=\lim_{n\to \infty}\mu(B_n) \geq \lim_{n\to \infty} \mu(A_n) + \delta =\mu(A_\infty)+\delta.$$
We claim that
$$
\sup_{\mu\in M(G)} \mu(A_\infty)=0.
$$
To see this, note that we have $G A_\infty \cap B_\infty = \emptyset$. Let $\mu\in M(G)$ be an ergodic measure. Since $\mu(B_\infty)>0$ we have $1\geq \mu(GA_\infty)+\mu(B_\infty)>\mu(GA_\infty)$. Hence ergodicity of $\mu$ implies that $\mu(GA_\infty)=0$, and since $A_\infty\subseteq GA_\infty$ we conclude that $\mu(A_\infty)=0$. As an arbitrary measure in $M(G)$ can be written as a convex combination of ergodic measures, the claim follows.

For each $n\in \mathbb N$ consider the function
$$f_n \colon M(G)\to [0,1], \ f_n(\mu)=\mu(A_n).$$
Since $A_n$ is compact open, the functions $f_n$ are continuous for all $n\in \mathbb N$, and as $\mu(A_\infty)=0$ for all $\mu\in M(G)$, the sequence $(f_n)_n$ converges pointwise to the constant zero function. As $M(G)$ is compact, Dini's Theorem applies and we conclude that this convergence is in fact uniform. In particular, there exists a $k\in \mathbb N$ such that $f_k(\mu)< \inf_{\mu\in M(G)} \mu(U)$ for all $\mu\in M(G)$, and hence
$$\sup_{\mu\in M(G)} \mu(A_k)\leq \inf_{\mu\in M(G)} \mu(U).$$
By weak $m$-comparison, $A_k\precsim_m U$ which implies
\begin{equation}\label{eq: A_k prec B-B_1}
    A_k\precsim \bigsqcup_{i=0}^m r(S_i)=B\setminus B_1.
\end{equation}

By construction, $A\setminus A_k=\bigsqcup_{i=1}^{k-1} U_i$ and the sets $\theta_i(U_i)\subseteq B_1$ are pairwise disjoint for all $1\leq i\leq k-1$. In other words we also have 
$A\setminus A_k\precsim B_1$. Combining this with (\ref{eq: A_k prec B-B_1}) yields $A\precsim B$ as desired.
\end{proof} 
As an application, we can conclude that the hypothesis of all of the main results concerning the relation between the group homology of topological full groups and groupoid homology from \cite{LiInfLoopSpaces25} are satisfied for compactly generated locally compact Hausdorff minimal ample groupoids with polynomial growth whose unit space has no isolated points. As a detailed discussion of these methods is beyond the scope of the article, we only explicitly state a sample result illustrating one of the many strong consequences one can obtain by combining our comparison result Theorem \ref{thm:pg-implies-dyn-comp} with the results from \cite{LiInfLoopSpaces25}. 
\begin{corollary}\label{cor:AH-conjecture}
    Let $G$ be a  compactly generated locally compact Hausdorff
    minimal ample groupoid whose unit space is compact and metrizable and has no isolated points. 
    If $G$ has polynomial growth, then $G$ satisfies Matui's AH-conjecture.  
\end{corollary}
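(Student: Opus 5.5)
The plan is to reduce the corollary to the comparison results of Li \cite{LiInfLoopSpaces25}, which establish the AH-conjecture for minimal ample groupoids with comparison. The first step is to check that $G$ meets the standing assumptions there. Since $G$ is compactly generated, it is $\sigma$-compact. Moreover, $\Go$ is compact, metrizable and has no isolated points, and $G$ is ample and minimal. Depending on the precise formulation in \cite{LiInfLoopSpaces25}, we may also need $G$ to be second countable and/or (topologically) amenable. Amenability would come from Theorem \ref{thm:PG-implies-amenable}. For second countability, I would argue that a compactly generated ample groupoid with metrizable compact unit space is second countable: it is covered by countably many compact open bisections, each homeomorphic to a compact open subset of $\Go$. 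I would double-check that the AH-statement in Li needs only comparison together with the other stated hypotheses.

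The core step is to produce comparison. By Theorem \ref{thm:pg-implies-dyn-comp}, which applies because $G$ is compactly generated with compact metrizable unit space and polynomial growth, $G$ has weak $m$-comparison for some $m\in\N\cup\{0\}$. Lemma \ref{lemma: weak m-comp implies comparison} applies since $G$ is $\sigma$-compact, minimal and ample with compact metrizable unit space, and it upgrades weak $m$-comparison to comparison.

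One subtlety is that Lemma \ref{lemma: weak m-comp implies comparison} only treats clopen $A,B$, whereas Definition \ref{def:dynamical-comparison} uses closed and open sets. I expect the version needed in \cite{LiInfLoopSpaces25} to be the clopen one, as in \cite{AraBonickeBosaLi23}, so this mismatch should be harmless. Otherwise, I would need the standard reduction in ample groupoids: approximate a closed $A$ from outside and an open $B$ from inside by clopen sets, keeping the strict measure inequality by compactness of $M(G)$. The condition $A\subseteq r(GB)$ is automatic by minimality when $B\neq\emptyset$.

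With comparison in hand, Li's main theorem gives that the comparison map $H_1(G)\to \ldots$ fits into the exact sequence $H_2(G)\to [[G]]_{ab}\to H_1(G)\to 0$, which is the AH-conjecture. The main obstacle I anticipate is purely a bookkeeping one: matching the exact hypotheses of Li's theorem (the definition of comparison, effectiveness or principality assumptions, and the absence of isolated points) with what we have proved. If Li requires effectiveness, I would note that Matui's conjecture is formulated for effective groupoids anyway, so this is part of the setting.
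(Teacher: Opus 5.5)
Your proposal is correct and follows the paper's proof: Theorem \ref{thm:pg-implies-dyn-comp} gives weak $m$-comparison, Lemma \ref{lemma: weak m-comp implies comparison} upgrades it to comparison (that lemma indeed only treats clopen sets, which is the version relevant for ample groupoids), and \cite[Corollary E]{LiInfLoopSpaces25} then yields the AH-conjecture. One slip in your last paragraph that does not affect the argument: the AH-conjecture asserts exactness of $H_0(G)\otimes\mathbb{Z}/2\to[[G]]_{ab}\to H_1(G)\to 0$, not of $H_2(G)\to[[G]]_{ab}\to H_1(G)\to 0$.
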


\begin{proof}
    By Theorem \ref{thm:pg-implies-dyn-comp}, $G$ has weak $m$-comparison for some sufficiently large $m$. Since $G$ is $\sigma$-compact, minimal and ample, Lemma \ref{lemma: weak m-comp implies comparison} implies that $G$ has comparison. Then \cite[Corollary E]{LiInfLoopSpaces25} yields that $G$ satisfies Matui's AH-conjecture. 
    
\end{proof}

\begin{example}
    Let $\Gamma$ be a countable group and let $X$ be a compact metrizable space. Suppose $\theta = (\{X_g\}_{g \in \Gamma}, \{\theta_g \}_{g\in \Gamma})$ is a partial action of $\Gamma$ on $X$, that is
    \begin{enumerate}
        \item For all $g \in \Gamma$ we have $X_g \subseteq X$ is open and $\theta_g \colon X_{g^{-1}} \to X_g$ is a homeomorphism;
        \item $D^{-1} := \{ (g,x) \in \Gamma \times X \mid g \in \Gamma, x \in X_{g^{-1}} \} $ is open in $\Gamma \times X$, and $D^{-1}\ni (g,x) \mapsto \theta_g(x) \in X$ is continuous;
        \item $X_e = X$, where $e$ is the unit of $\Gamma$, and for all $g_1,g_2 \in \Gamma$ the map $\theta_{g_1g_2}$ is an extension of $\theta_{g_1}\theta_{g_2}$.
    \end{enumerate}
    To such a partial action we may associate a groupoid $G$, known as the \emph{partial action groupoid associated with $\theta$}. As a set we have
    \begin{align*}
        G  = \{ (x,g,y) \in X \times \Gamma \times X \mid y \in X_{g^{-1}} \text{ and } \theta_g(y ) = x \}.
    \end{align*}
    Multiplication and inversion is given by
    \begin{align*}
        (x,g_1,y) (y,g_2, z) = (x, g_1 g_2 , z) \quad \text{and} \quad (x,g,y)^{-1} = (y, g^{-1} ,x ).
    \end{align*}
    We then have a natural identification of $\Go $ with $X$ through $(x,e,x) \mapsto x$. By equipping $G$ with the relative topology from $X \times \Gamma \times X$, we obtain a second-countable locally compact Hausdorff \'etale groupoid \cite{AbadiePartialActions04}.

    Suppose $\Gamma$ comes equipped with a proper length function $\ell_\Gamma$ for which $\gamma_{\ell_{\Gamma}}$ has polynomial growth. 
    It is straightforward to verify that $\ell \colon G \to [0,\infty)$ given by
    \begin{align*}
        \ell(x,g,y) = \ell_\Gamma (g) 
    \end{align*}
    is a uniformly fibrewise coarse length function on $G$. Moreover, given any $y \in X = \Go$, we have
    \begin{align*}
        \vert B_{\ell}(t) y \vert &= \vert \{ (x,g,y) \mid y \in X_{g^{-1}}, x= \theta_g(y) \text{ and } \ell_{\Gamma}(g) \leq t \} \\
        &\leq \vert \{ g \in \Gamma \mid \ell_\Gamma(g) \leq t \}\vert .
    \end{align*}
    Since this estimate is independent of $y \in \Go$, we deduce that $\gamma_{\ell}$ has polynomial growth. In light of Theorem \ref{thm:pg-implies-dyn-comp} we deduce that there is $m \in \N \cup \{0\}$ for which $G$ has weak $m$-comparison. If $X$ is totally disconnected and $\theta$ is a minimal partial action, then it follows from Lemma \ref{lemma: weak m-comp implies comparison} that $G$ has comparison. Lastly, if $X$ moreover has no isolated points, $G$ satisfies Matui's AH-conjecture by Corollary \ref{cor:AH-conjecture}. 
    
\end{example} 

\printbibliography

\end{document}